\documentclass[11pt,a4paper]{article}

\usepackage{epsf,epsfig,amsfonts,amsgen,amsmath,amstext,amsbsy,amsopn,amsthm,cases,listings,color
}
\usepackage{ebezier,eepic}
\usepackage{color}
\usepackage{multirow}
\usepackage{epstopdf}
\usepackage{graphicx}
\usepackage{pgf,tikz}
\usepackage{mathrsfs}
\usepackage[marginal]{footmisc}
\usepackage{enumitem}
\usepackage[titletoc]{appendix}
\usepackage{booktabs}
\usepackage{url}
\usepackage{mathtools}
\usetikzlibrary{positioning, shapes.geometric, fit}
\usepackage{caption}
\usepackage{subcaption}
\usetikzlibrary{positioning}
\usepackage{pgfplots}
\usepackage{authblk}
\usepackage{amssymb}
\usepackage{circuitikz}
\usepackage{wasysym}
\usepackage{comment}
\usepackage{empheq}
\usetikzlibrary{shapes.geometric, positioning, matrix, calc}
\usepackage{dsfont}
\usepackage{indentfirst}

\usepackage{caption}
\usepackage{subcaption}
\pgfplotsset{compat=1.18}
\usepackage{mathrsfs}
\usepackage{wasysym} 
\usetikzlibrary{arrows}
\usepackage{aligned-overset}
\usepackage{bm}
\usepackage[backref=page]{hyperref}
\usepackage{makecell}


\allowdisplaybreaks[1]

\definecolor{uuuuuu}{rgb}{0.27,0.27,0.27}
\definecolor{sqsqsq}{rgb}{0.1255,0.1255,0.1255}

\newtheorem{definition}{Definition} [section]
\newtheorem{theorem}[definition]{Theorem}
\newtheorem{lemma}[definition]{Lemma}
\newtheorem{proposition}[definition]{Proposition}

\newtheorem{claim}[definition]{Claim}
\newtheorem{problem}[definition]{Problem}

\newenvironment{pf}{\noindent{\bf Proof}}

\begin{document}
\title{\bf\Large  On saturation problems involving clique number and matching number}
\date{}
\author[1]{Zian Chen\thanks{Email: \texttt{michaelchen24@163.com}}}
\author[1,2]{Guorong Gao \thanks{
Research supported by National Key R\&D Program of China (Grant No. 2023YFA1010202),
National Natural Science Youth Foundation of China (Grant No. 12401448), Natural Science Foundation of Fujian Province (Grant No. 2024J08030).
Email: \texttt{grgao@fzu.edu.cn}}}  
\author[1]{Jianfeng Hou\thanks{Research supported by National Key R$\&$D Program of China (Grant No. 2023YFA1010202). Email: \texttt{jfhou@fzu.edu.cn}}}

\author[3]{Yue Ma\thanks{Research supported by National Natural Science Foundation of China (Grant No. 12401455 and Grant No. 12571369). Email: \texttt{yma@njust.edu.cn}}}
\affil[1]{Center for Discrete Mathematics,
            Fuzhou University, Fuzhou,  Fujian, China}
\affil[2]{School of Mathematics and Statistics,
 Fuzhou University, Fuzhou, Fujian, China}
\affil[3]{School of Mathematics and Statistics,
Nanjing University of Science and Technology,
Nanjing, Jiangsu, China.}
\maketitle
\begin{abstract}
For a clique $K_r$, a graph is $K_r$-saturated if it contains no copy of $K_r$ and the addition of any edge from its complement creates a $K_r$.
A classical result of Erd\H{o}s-Hajnal-Moon and Zykov shows that the number of edges of an $n$-vertex $K_r$-saturated graph is at least $(r-2)n-\binom{r-1}{2}$. In this paper, 
we focus on the number of edges of the $K_r$-saturated graphs with a fixed matching number. Let $G$ be an $n$-vertex $K_r$-saturated graph with matching number $\nu(G) = s$. For sufficiently large $n$, we prove that the number of edges
\begin{equation*}
     e(G)\geq \left\{\begin{array}{cl}{(r-1)n-\frac{r}{2}(r-1)-1,}&{\quad\mathrm{if}~s=r-1;}\\{(r-1)n + (s-r)^2 - \frac{1}{2}(r+2)(r-3) - 5,}&{\quad\mathrm{if}~s>r-1.}\\\end{array}\right.
\end{equation*}
Moreover, we completely characterize the graphs attaining the equality.
\medskip

\textbf{Keywords:} Saturation number, Matching number, Clique
\end{abstract}
\section{Introduction}\label{SEC:Intorduction}

In this paper, all graphs considered are finite, undirected, and simple. Let $G=(V(G),E(G))$ be a graph with vertex set $V(G)$ and edge set $E(G)$. We denote the number of edges in $G$ by $e(G)$, and let $n = |V(G)|$ denote the number of vertices. 
For standard graph-theoretic notation, we use $K_r$ to denote the complete graph on $r$ vertices, and $P_l$ and $C_l$ to denote the path and cycle of order $l$, respectively. 
A \textit{matching} in a graph is a set of edges without common vertices. The \textit{matching number}, denoted by $\nu(G)$, is the maximum size of a matching in $G$. 

A graph $G$ is said to be \textit{$H$-saturated} if $G$ does not contain $H$ as a subgraph, but the addition of any edge $xy \in E(\bar{G})$ creates a copy of $H$. 
The classical saturation number, denoted by $sat(n, H)$, is the minimum number of edges in a $H$-saturated graph on $n$ vertices. Saturation problems serve as a minimization counterpart to the classical Tur\'{a}n problems. While the Tur\'{a}n number $ex(n, K_r)$ asks for the maximum number of edges in a $K_r$-free graph, the saturation number $sat(n, K_r)$ seeks the minimum. 
The systematic study of saturation numbers was initiated independently by Erd\H{o}s-Hajnal-Moon \cite{EHM64} and Zykov \cite{Z52} in the 1960s. They showed that
\[
sat(n, K_r) = (r-2)n - \binom{r-1}{2},
\]
for $n \ge r-1$ and the unique extremal graph is $K_{r-2} \vee \bar{K}_{n-r+2}$, where the extremal graph is a saturated graph attaining the saturation number. 
A remarkable distinction between these two parameters, observed by K\'{a}szonyi-Tuza \cite{KT86}, is that $sat(n, F) = O(n)$ for any fixed graph $F$, whereas $ex(n, F)$ is typically $O(n^2)$. For a comprehensive survey on saturation numbers, we refer the reader to Faudree-Faudree-Gould-Jacobson \cite{FFGJ11}.

Following this foundational work, researchers began investigating saturation numbers under additional structural constraints. In 1965, Hajnal \cite{H65} studied $K_r$-saturated graphs that do not contain a dominating vertex (a vertex of degree $n-1$).
Later, F\"{u}redi-Seress \cite{FS94} and Amin et al. \cite{AFGS13} extended these results by determining $sat(n, K_r)$ for graphs with specific maximum degree constraints.
In a related direction, Erd\H{o}s-Holzman \cite{EH94} analyzed the edge number of $K_3$-saturated graphs with maximum degree $cn$. They showed that as $n\rightarrow\infty$, the saturation number of $K_3$ under this constrain is asymptomatically equal to $\frac{1}{2}(11-7c)n+o(n)$ for $\frac{3}{7} \le c \le \frac{1}{2}$, and $4n+o(n)$ for $\frac{2}{5} \le c \le \frac{3}{7}$.

For further results concerning limits on the minimum degree, we refer readers to \cite{D17}  and  \cite{DH86}.

Recently, the interplay between classical extremal parameters and matching numbers has attracted significant attention. In 2024, Alon and Frankl \cite{AF24} initiated the investigation of Tur\'{a}n numbers for graphs with bounded matching number. 
Inspired by their work, and by the recent study of saturation numbers for matchings by Zhang, Lu, and Yu \cite{ZLY24}, we investigate saturation numbers for cliques under a prescribed matching number.

Let $SAT(n,r,s)$ denote the family of $n$-vertex graphs that are $K_r$-saturated and satisfy $\nu(G) = s$. We define the constrained saturation number as:
\[
sat(n,r,s) = \min\{e(G) : G \in SAT(n,r,s)\}.
\]
For $n>2r-4$, every $K_r$-saturated graph $G$ satisfies
$\nu(G)\ge r-2.$
Indeed, since every $K_r$-saturated graph has minimum degree at least $r-2$, by greedily selecting disjoint edges, one obtains a matching of size $r-2$.
When $\nu(G)=r-2$, by the theorem of Erd\H{o}s-Hajnal-Moon \cite{EHM64}, we have $sat(n,r,r-2)=sat(n, K_r) = (r-2)n - \binom{r-1}{2}$.
Our main result gives the exact value of $sat(n,r,s)$ for $s\geq r-1$.

Before stating our main result, we describe the extremal constructions. Let $C_5$ be a cycle of length five with vertex set $\{v_1,v_2,v_3,v_4,v_5\}$ and edge set $\{v_iv_{i+1}: 1\leq i\leq 5\}$, where indices are taken modulo $5$. A blow-up of $C_5$, denoted by $C_5(t_1,t_2,t_3,t_4,t_5)$, is the graph with vertex set $\bigcup_{1\leq i\leq 5} V_i$ where $|V_i|=t_i$, and edge set 
$$\{uv: u\in V_i \text{ and } v\in V_{i+1} \text{ for }1\leq i\leq 5\},$$
Let the graph $$G(n,3, s)=C_5(1,n-2s+2,1,s-2,s-2)$$ 
as shown in Figure \ref{fig:both_graphs} and the graph family
$$\mathcal{H}(n,3)=\{C_5(1,t,1,n-t-3,1): 1\leq t\leq n-4\}.$$
For two graphs $G$ and $H$ with disjoint vertex sets, the \emph{union} of $G$ and $H$, denoted by $G\cup H$, is the graph with vertex set $V(G)\cup V(H)$ and edge set $E(G)\cup E(H)$.
The \emph{join} of $G$ and $H$, denoted by $G\vee H$, is the graph union $G\cup H$ together with all the edges joining $V(G)$ and $V(H)$.
For $s\geq r > 3$, we define $$G(n,r,s) = K_{r-3} \vee G(n-r+3,3, s-r+3)$$ and 
$$\mathcal{H}(n,s)=\{K_{r-3} \vee H: H\in \mathcal{H}(n-r+3,3)\}.$$
\begin{figure}[htbp]
    \centering
    \begin{subfigure}{0.45\textwidth}
\begin{circuitikz}
scale=0.5

\tikzstyle{every node}=[font=\fontsize{18.2pt}{23.7pt}\selectfont]
\fill[black] (5,15.125) circle (0.125cm);
\draw  (3.5,13.625) ellipse (1.6cm and 0.5cm);
\draw  (3.5,11.625) ellipse (1.6cm and 0.5cm);
\fill[black]  (5,9.875) circle (0.125cm);
\draw  (7.5,12.75) ellipse (1.8cm and 0.75cm);
\node [font=\fontsize{18.2pt}{23.7pt}\selectfont, fill={rgb,255:red,255; green,255; blue,255}, fill opacity=1, text opacity=1, inner xsep=0.080cm, inner ysep=0.085cm, rounded corners=0.020cm] at (3.5,13.625) {$s-2$};
\node [font=\fontsize{18.2pt}{23.7pt}\selectfont, fill={rgb,255:red,255; green,255; blue,255}, fill opacity=1, text opacity=1, inner xsep=0.080cm, inner ysep=0.085cm, rounded corners=0.020cm] at (3.5,11.625) {$s-2$};
\node [font=\fontsize{18.2pt}{23.7pt}\selectfont, fill={rgb,255:red,255; green,255; blue,255}, fill opacity=1, text opacity=1, inner xsep=0.080cm, inner ysep=0.085cm, rounded corners=0.020cm] at (7.5,12.75) {$n-2s+2$};
\draw (5,15.125) to[short] (3.5,14.125);
\draw (5,15.125) to[short] (2.125,13.875);
\draw (5,15.125) to[short] (4.875,13.875);
\draw [short] (3.375,13.125) -- (3.375,12.125);
\draw [short] (2.125,13.375) -- (2.125,11.875);
\draw [short] (4.875,13.375) -- (4.875,11.875);
\draw [short] (3.5,11.125) -- (5,9.875);
\draw [short] (5,9.875) -- (2.125,11.375);
\draw [short] (5,9.875) -- (4.875,11.375);
\draw [short] (5,15.125) -- (7.625,13.5);
\draw [short] (5,9.875) -- (7.75,12);
\end{circuitikz}
    \end{subfigure}

 \caption{The structure of $G(n,3,s)$}
    \label{fig:both_graphs}
\end{figure}

\begin{theorem}\label{thm:rs}
    Let  $s+1 \geq  r \ge 3$ and $n\geq 63(s-2)^2+r^2$. Then
\begin{equation*}
     sat(n,r,s)=\left\{\begin{array}{cl}{(r-1)n-\frac{r}{2}(r-1)-1,}&{\quad\mathrm{if}~s=r-1;}\\{(r-1)n + (s-r)^2 - \frac{1}{2}(r+2)(r-3) - 5,}&{\quad\mathrm{if}~s>r-1.}\\\end{array}\right.
\end{equation*}
    Moreover, if $G$ is an $n$-vertex $K_r$-saturated graph with matching number $s$ and $sat(n,r,s)$ edges, then $G\cong K_{r-3}\vee K_{2,n-r+1}$ for $s=r-1$, $G\in \mathcal{H}(n,s)$ for $s=r$ and $G\cong G(n,r,s)$ for $s> r$.
\end{theorem}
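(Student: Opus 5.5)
The proof has two halves: upper bounds from the constructions, and a matching lower bound with characterization. The lower bound will be proved first for $r=3$ and then lifted to general $r$ by peeling off dominating vertices.

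\textbf{Upper bounds.} I check that each construction is $K_r$-saturated, has matching number $s$, and has the claimed number of edges. For $r=3$, $G(n,3,s)=C_5(1,n-2s+2,1,s-2,s-2)$ is triangle-free, since a blow-up of $C_5$ inherits this. Any non-edge lies inside some $V_i$ or between $V_i$ and $V_{i+2}$, and in both cases the two endpoints have a common neighbour in a neighbouring class, so the graph is $K_3$-saturated. The matching number is computed by K\"onig/Tutte: the vertices of $V_1\cup V_3\cup V_4\cup V_5$ (plus $V_2$ where needed) give a cover of size $s$, and an explicit matching of size $s$ exists. Counting edges gives $2n+(s-3)^2-5$ in the normalization of the theorem. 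For $r>3$, joining $K_{r-3}$ preserves saturation, because $\omega$ rises by exactly $r-3$ and every new non-edge lies inside the old graph. The matching number rises by exactly $r-3$ once $n$ is large, and the edge counts then follow by arithmetic. The cases $s=r-1$ (namely $K_{r-3}\vee K_{2,n-r+1}$) and $s=r$ (the family $\mathcal{H}$) are checked the same way.

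\textbf{Lower bound, reduction to $r=3$.} Let $G\in SAT(n,r,s)$ be edge-minimal. Since $s$ is tiny compared with $n$, the Gallai--Edmonds / Tutte--Berge structure gives a set $S$ with $|S|\le s$ such that $G-S$ has at most $2s$ vertices in non-trivial components, while the remaining $n-O(s)$ vertices are isolated in $G-S$. So almost every vertex $v$ has $N(v)\subseteq S$. Saturation forces any two such isolated vertices $u,v$ (they are non-adjacent) to have a $K_{r-2}$ in $N(u)\cap N(v)\subseteq S$. By pigeonhole on the neighbourhood types within $S$ (at most $2^{s}$ of them, and $n$ is large), I then argue as follows. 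If some vertex of $S$ is adjacent to all but $O(s^2)$ vertices, it is actually dominating; otherwise the edge count already exceeds the bound by a counting argument, which is where $n\ge 63(s-2)^2+r^2$ is used. A dominating vertex $w$ has the property that $G-w$ is $K_{r-1}$-saturated with $\nu(G-w)=s-1$, and $e(G)=e(G-w)+n-1$. Inducting $r-3$ times therefore reduces to $r=3$, or to the case where no vertex dominates. In the latter case I show directly that $e(G)\ge (r-1)n+\Omega(n)$ when $r\ge4$.

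\textbf{Case $r=3$ (the main obstacle).} Here $G$ is triangle-free, saturated (diameter $2$), and has $\nu=s$. Each isolated vertex of $G-S$ has degree at least $2$: otherwise its unique neighbour $x$ must be adjacent to all other vertices, which forces a star, and then $\nu=1$. This gives roughly $2n$ edges from these vertices, so the task is to bound exactly the excess coming from $S$ and the small components. Group the isolated vertices by their neighbourhood $N(v)\subseteq S$. These neighbourhoods are independent sets, and any two of them must intersect, by diameter $2$ and triangle-freeness; a singleton neighbourhood is excluded. I expect that minimality forces all large classes to share a common neighbourhood $\{a,b\}$, which yields the $C_5$-blow-up skeleton with $V_2$ the big class. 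The remaining vertices, about $2s-4$ of them, must realise the matching number $s$ while every pair at distance at least $3$ is repaired. Showing that the cheapest way to do this is the complete bipartite piece $V_4$--$V_5$ of sizes $s-2,s-2$, which costs $(s-2)^2$ plus the edges to $a,b$, is the hard optimisation. My first attempt will be this: each of the $\approx 2(s-2)$ non-isolated vertices outside $\{a,b\}$ is non-adjacent to $V_2$, so it needs a neighbour in $N(V_2)=\{a,b\}$. Vertices adjacent only to $a$ must pairwise reach the vertices adjacent only to $b$ within distance $2$ without creating triangles, and this forces a complete bipartite pattern between the two sides. Optimising $pq$ subject to $p+q\approx 2(s-2)$ and the constraint from the matching number then gives $(s-2)^2$. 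Tracking the equality cases through this argument yields the characterization, with the degenerate cases $s=2,3$ giving $K_{2,n-2}$ and $\mathcal{H}(n,3)$ respectively.
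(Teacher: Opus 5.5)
\textbf{There are genuine gaps.} Your upper-bound discussion and the peeling of a dominating vertex ($e(G)=e(G-w)+n-1$, then induction down to $r=3$) agree with the paper. However, the two steps that carry the lower bound are not proved.

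First, for $r\ge 4$ you need that a graph in $SAT(n,r,s)$ without a vertex of degree $n-1$ has more than $f(n,r,s)$ edges. You only announce that you will ``show directly'' $e(G)\ge (r-1)n+\Omega(n)$, and the mechanism you sketch does not work. Pigeonholing over $2^s$ neighbourhood types gives nothing, because $n$ is only of order $63s^2$. The claim that a vertex of $S$ missing only $O(s^2)$ vertices must be dominating is false: in $G(n,r,s)$ the vertex forming the class $V_1$ of the $C_5$-blow-up has only $s-r+2$ non-neighbours, lies in the Gallai--Edmonds set, and is not dominating.

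The paper settles this step with three ingredients: Hajnal's theorem ($\delta(G)\ge 2(r-2)$ when no vertex has degree $n-1$), the Alon--Erd\H{o}s--Holzman--Krivelevich bound for $r=4$, $\delta=4$, and the count $e(G)\ge \frac12(r-2+\delta)(n-\delta)$. In your setting there is also an elementary substitute. For $r\ge 4$, suppose $d(u)\le r-1$. Every non-neighbour of $u$ must contain some $K_{r-2}$ of $N(u)$ in its neighbourhood, so the non-edges inside $N(u)$ form a star, and a short case analysis yields a vertex of degree $n-1$. So without such a vertex every degree is at least $r$. The $n-2s$ vertices missed by a maximum matching are independent, hence $e(G)\ge r(n-2s)>f(n,r,s)$.

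Second, for $r=3$ the decisive structural fact is only ``expected'': that there are two vertices $a,b$ such that every other vertex is adjacent to $a$ or $b$. The paper obtains it from the partition of $V(G)\setminus V(C_5)$ into $A,B,C$ and the counting claims $|C|\le(s-3)^2$, $|B|\le 52(s-3)^2$, and $B=C=A_{2,4}=A_{2,5}=\emptyset$.

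Your own observation closes this gap if you apply it to \emph{every} vertex, not only the non-isolated ones. A single vertex $u$ with $N(u)=\{a,b\}$ already forces each $x\notin\{u,a,b\}$ to be adjacent to $a$ or $b$. Triangle-freeness plus saturation then give $G\cong K_{2,n-2}$ or $G\cong C_5(1,d,1,q,p)$ with $e(G)=2n-5+(p-1)(q-1)$. If instead $\delta(G)\ge 3$, the unmatched vertices give $e(G)\ge 3(n-2s)$, which is too many.

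Your final optimisation also points the wrong way. There is no constraint $p+q\approx 2(s-2)$, and minimising $pq$ for a fixed sum would make one side small. What the matching number supplies is $\min(p,q)\ge s-2$, since $\{a,b\}$ together with the smaller side is a vertex cover. Then $(p-1)(q-1)\ge(s-3)^2$, with equality only for $p=q=s-2$ (or $\min(p,q)=1$ when $s=3$).

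Minor: the size-$s$ cover of $G(n,3,s)$ is $V_1\cup V_3\cup V_4$; your set $V_1\cup V_3\cup V_4\cup V_5$ has $2s-2$ vertices.
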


The rest of this paper is organized as follows. In the next section, we introduce some notations and previous results required for our proof. Section \ref{SEC:prelim} contains the detailed proofs. Finally, we conclude the paper with a discussion of a further problem.

\section{Preliminaries}\label{SEC:Preliminary}
In this section, we present some definitions and preliminary results. Let $G$ be a graph and $v\in V(G)$.
We use $x\sim y$ to denote $\{x,y\}\in E(G)$ and $x\not\sim y$ to denote $\{x,y\}\notin E(G)$. For a vertex set $X\subseteq V(G)$, the neighborhood of $v$ in $X$, denoted by $N_X(v)$, is a vertex set consisting of the vertices in $X$ that are adjacent to $v$. Let $d_X(v)=|N_X(v)|$ be the degree of $v$ in $X$. 
For two sets $A,B\subseteq V(G)$, denote the edges with two ends in $A$ by $E(A)$, and the edges between $A$ and $B$ by $E(A,B)$. Let $e(A)=|E(A)|$ and $e(A,B)=|E(A,B)|$.
We use $\Delta(G)$ and $\delta(G)$ to denote the maximum degree and minimum degree, respectively.  

\begin{theorem}[Hajnal \cite{H65}]\label{thm:rdegree}
    If $G$ is an n-vertex  $K_r$-saturated graph with $\Delta(G) < n-1$, then the minimum degree satisfies $\delta(G) \ge 2(r-2)$.
\end{theorem}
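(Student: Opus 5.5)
We argue by contradiction, combining a local analysis of a minimum-degree vertex with a Hajnal-type intersection lemma for maximum cliques. Fix a vertex $v$ with $d(v)=\delta(G)$ and put $S=N(v)$. Since $G$ is $K_r$-free, $G[S]$ is $K_{r-1}$-free. Since $\Delta(G)<n-1$, $v$ has a non-neighbour $u\neq v$. Adding $uv$ creates a $K_r$ through $uv$, so $N(u)\cap S$ contains a $K_{r-2}$. Hence $\omega(G[S])=r-2$, and the family $\mathcal{Q}$ of $(r-2)$-cliques of $G[S]$ is exactly the family of maximum cliques of $G[S]$. The same argument shows more: for every non-neighbour $u$ of $v$, some member of $\mathcal{Q}$ lies inside $N(u)$.

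The combinatorial input is Hajnal's lemma on maximum cliques: for any graph $F$, the family $\mathcal{Q}$ of maximum cliques satisfies
\[
\Bigl|\bigcup_{Q\in\mathcal Q} Q\Bigr|+\Bigl|\bigcap_{Q\in\mathcal Q}Q\Bigr|\ \ge\ 2\,\omega(F).
\]
I would prove it by induction on $|\mathcal Q|$. The key fact is that if $Q_1,Q_2$ are maximum cliques, then a maximum independent set in the complement of $F[Q_1\cup Q_2]$ (a bipartite graph, by König) gives $|Q_1\cup Q_2|+|Q_1\cap Q_2|\ge 2\omega$. One then iterates, adding cliques one at a time. Applied to $F=G[S]$, it gives $|S|\ge 2(r-2)$, i.e.\ $\delta(G)\ge 2(r-2)$, as soon as $I:=\bigcap_{Q\in\mathcal Q}Q=\emptyset$.

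So everything reduces to showing $I=\emptyset$. Suppose $x\in I$. Since $\Delta(G)<n-1$, $x$ has a non-neighbour $u$, and $u\neq v$ because $x\sim v$. There are two cases.
\begin{itemize}
\item If $u\notin S$, then $u$ is a non-neighbour of $v$, so by the first paragraph some $Q\in\mathcal Q$ lies in $N(u)$. Then $x\notin Q$ because $u\not\sim x$, contradicting $x\in I$.
\item If $u\in S$, adding $ux$ produces an $(r-2)$-clique $Q'\subseteq N(u)\cap N(x)$. If $Q'\subseteq S$, then $Q'\cup\{u\}$ is an $(r-1)$-clique in $S$, which with $v$ gives a $K_r$, a contradiction. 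Hence $Q'$ meets $V\setminus(S\cup\{v\})$.
\end{itemize}

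The main obstacle is the case $u\in S$. The clique supplied by $ux$ leaves $S$, so it does not directly produce a member of $\mathcal Q$ avoiding $x$. My first attempt is to pick $w\in Q'\setminus S$. Then $w\not\sim v$, so $N(w)\cap S$ contains some $Q\in\mathcal Q$, and this $Q$ contains $x$ since $x\in I$. I would then track the common neighbourhood of $x$ and $u$ to show that $u$ can be put into such a clique in place of some vertex, forcing a $K_{r-1}$ in $S$ or a $K_r$ in $G$. If that fails, the fallback is to apply the intersection lemma to a larger, more symmetric family instead of $\mathcal Q$: all $(r-2)$-cliques lying in $S$ that occur as common neighbourhoods created by added edges at $v$ or at vertices of $S$. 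Alternatively, one can choose $v$ extremally, among minimum-degree vertices one whose $G[S]$ has the smallest $I$, and derive a contradiction by switching to $u$. Once $I=\emptyset$ is secured, the lemma gives the bound immediately.
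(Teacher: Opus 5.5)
\textbf{The gap: the reduction to $I=\emptyset$ is false.} The claim fails even when $v$ has minimum degree, so your open case $u\in S$ cannot be closed by deriving a contradiction.

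Here is a counterexample for $r=5$. Take vertices $v,x,a_1,a_2,b_1,b_2,u_1,u_2,t_1,t_2,s_1,s_2$, with the following edges:
\begin{itemize}
\item $v$ is joined to $x,a_1,a_2,b_1,b_2,u_1,u_2$;
\item $x$ is joined to every $a_i,b_i,t_i,s_i$;
\item add all edges $a_ib_j$, $u_1a_i$, $u_2b_i$, and the edges $u_1u_2$, $t_1t_2$, $s_1s_2$;
\item each $t_i$ is joined to $u_1,u_2,b_1,b_2,a_i$, and each $s_i$ to $u_1,u_2,a_1,a_2,b_i$.
\end{itemize}
This graph is $K_5$-free and $K_5$-saturated. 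Up to the automorphisms $a_1\leftrightarrow a_2,\ t_1\leftrightarrow t_2$ and $a_i\leftrightarrow b_i,\ u_1\leftrightarrow u_2,\ s_i\leftrightarrow t_i$, the non-edges are $vt_1,xu_1,a_1a_2,a_1u_2,a_1t_2,t_1s_1$. Their common neighbourhoods contain the triangles $\{x,a_1,b_1\}$, $\{a_1,s_1,s_2\}$, $\{v,x,b_1\}$, $\{u_1,s_1,s_2\}$, $\{x,b_1,t_1\}$, $\{x,a_1,b_1\}$ respectively.

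Here $\Delta=d(x)=9<11$ and $\delta=d(v)=7$. Yet the triangles of $G[N(v)]$ are exactly the four sets $\{x,a_i,b_j\}$, so $I=\{x\}$, and the non-neighbours $u_1,u_2$ of $x$ lie in $S$. This also rules out your fallbacks. Any family of $(r-2)$-cliques inside $S$ is a subfamily of $\mathcal Q$, so its intersection still contains $x$; the ``larger family'' idea therefore fails too. Nothing in your sketch supports the extremal choice of $v$.

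\textbf{The missing idea: keep the vertices you discarded.} When you bounded $|S|\ge|\bigcup\mathcal Q|$, you threw away part of $S$. Let $Z=S\setminus\bigcup\mathcal Q$. Since $|S|=|\bigcup\mathcal Q|+|Z|$, Hajnal's lemma reduces the theorem to showing $|Z|\ge|I|$. In the example, $|\bigcup\mathcal Q|=5$, $|I|=1$ and $Z=\{u_1,u_2\}$.

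Each $x\in I$ is adjacent to $v$, to every non-neighbour of $v$ (your first case), and to every other vertex of $\bigcup\mathcal Q$. So all non-neighbours of $x$ lie in $Z$. For $J\subseteq I$, let $M(J)\subseteq Z$ be the set of non-neighbours of members of $J$; prove $|M(J)|\ge|J|$ by induction on $|J|$.
\begin{itemize}
\item Pick $x\in J$, a non-neighbour $z$ of $x$ (it exists since $\Delta<n-1$), and an $(r-2)$-clique $R\subseteq N(x)\cap N(z)$.
\item The set $(J\setminus R)\cup(R\setminus M(J))$ is a clique, so $K_r$-freeness gives $|J\setminus R|\le|R\cap M(J)|+1$.
\item The sets $R\cap M(J)$, $\{z\}$ and $M(J\cap R)$ are pairwise disjoint subsets of $M(J)$. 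Indeed $z\notin R$, and both $z$ and every vertex of $R\cap Z$ are adjacent to all of $J\cap R$.
\item Since $x\notin R$, induction applies to $J\cap R$ and gives $|M(J)|\ge|J\setminus R|+|J\cap R|=|J|$.
\end{itemize}
With $J=I$ this is $|Z|\ge|I|$, hence $\delta(G)\ge 2(r-2)$.

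\textbf{A smaller issue: your sketch of the lemma.} For two maximum cliques the inequality is just the identity $|Q_1\cup Q_2|+|Q_1\cap Q_2|=2\omega$, so K\H{o}nig's theorem plays no role. The real content is in the induction step. For a subfamily $\mathcal Q'$ and a further maximum clique $Q$, the set $(Q\cap\bigcup\mathcal Q')\cup(\bigcap\mathcal Q'\setminus Q)$ is a clique; that is what makes the induction go through.
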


In 1996, Alon, Erd\H{o}s, Holzman, and Krivelevich studied saturation problems with bounded degree.
\begin{theorem}[Alon et al.  \cite{AEHK96}]\label{thm:4degree}
   Let $G$ be an $n$-vertex $K_4$-saturated graph with $\delta(G)=4$. If $G$ contains no vertex of degree $n-1$, then $e(G)\ge 4n-15$.
\end{theorem}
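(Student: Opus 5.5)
The plan is to fix a vertex $v$ with $d(v)=4$, write $N=N(v)=\{a_1,a_2,a_3,a_4\}$ and $W=V(G)\setminus(N\cup\{v\})$, so $|W|=n-5$, and split
\[
e(G)=4+e(N)+e(N,W)+e(W).
\]
The goal is to show $e(N,W)+e(W)\ge 4(n-5)-O(1)$, with the constant tuned to give $4n-15$. All of the structure will come from saturation.

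First I record the basic facts.
\begin{enumerate}
\item $G[N]$ is triangle-free, since a triangle in $N$ together with $v$ would form a $K_4$.
\item Each $w\in W$ is non-adjacent to $v$. Adding $vw$ must create a $K_4$, so $N(w)\cap N$ contains an edge of $G[N]$. In particular $G[N]$ has an edge, and $d_N(w)\ge 2$.
\item If $w,w'$ are both adjacent to the two ends of one edge $ab$, then $ww'\notin E(G)$, since otherwise $\{a,b,w,w'\}$ would be a $K_4$.
\end{enumerate}
By (2) I assign each $w\in W$ to an edge $e(w)$ of $G[N]$ whose ends it dominates. By (3) each class $W_e=\{w: e(w)=e\}$ is independent. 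Since $G[N]$ is a triangle-free graph on four vertices, it has at most four edges, so $W$ splits into at most four independent sets.

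The counting idea is that a vertex $w$ with $d_N(w)=2$ still has degree at least $4$, so it has at least two neighbours in $W$, and these lie in other classes. Suppose $w,w'\in W_e$ are distinct, with $e=ab$. Then $ww'\notin E(G)$, and adding $ww'$ forces an edge $xy$ in $N(w)\cap N(w')$. This edge cannot be $ab$. Either $xy$ uses vertices of $N\setminus\{a,b\}$, or it gives $w$ and $w'$ a common pair of neighbours outside $N\cup\{v\}$. I want to use this to show that vertices of $W$ have \emph{total} degree close to $8$ on average, by a discharging argument. Each $w$ starts with charge $d(w)$, and I want to show that after redistribution every vertex except $O(1)$ of them has charge at least $8$.

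The key step is the following. If many vertices of $W_e$ have only the two neighbours $a,b$ in $N$, then for each pair of them the edge forced in the common neighbourhood must lie in $W$. Since the hypothesis is that no vertex has degree $n-1$, no single vertex can serve all pairs. From this I want to deduce that such vertices send at least $2$ edges each, on average, into $W$ beyond the trivial count, or else are adjacent to high-degree vertices of $W$ that can absorb their deficit.

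The main obstacle is exactly this charging step: controlling vertices of degree $4$ to $7$ in $W$ whose neighbourhoods are sparse. To handle it I would apply facts (1)--(3) to any low-degree vertex $x$ in place of $v$. Its neighbourhood $H_x$ is triangle-free and every non-neighbour of $x$ dominates an edge of $H_x$. So if $d(x)\le 7$, a bounded set $H_x$ dominates all of $V(G)\setminus N[x]$ in this edge sense. With two low-degree vertices $x,y$ in different classes, this gives two bounded "hub" sets that every other vertex must meet in at least $2+2$ places, up to overlaps. I expect this to yield degree at least $4+$(extra) with an average of $8$, and the additive loss to come only from the $O(1)$ hub vertices themselves.

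I would finish by a case analysis on $G[N]$ up to isomorphism: a single edge, $P_3$, $2K_2$, $P_4$, $K_{1,3}$, or $C_4$. In each case I would verify that the accumulated loss is at most $15$. I would then check tightness against the extremal examples, for instance a suitable blow-up in which all but $O(1)$ vertices have degree exactly $8$, to confirm that the constant $15$ comes out of the worst case.
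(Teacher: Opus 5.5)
The paper does not prove this statement; it quotes it from \cite{AEHK96}. Judged on its own, your proposal has a genuine gap, and it is at the step you call the key step. You claim that for distinct $w,w'\in W_e$ with $e=ab$, the edge forced in $N(w)\cap N(w')$ ``cannot be $ab$''. This is false. We have $a,b\in N(w)\cap N(w')$ and $ab\in E(G)$, so $G+ww'$ contains the $K_4$ on $\{a,b,w,w'\}$. That is exactly the $K_4$ you used in fact (3) to show $ww'\notin E(G)$. Non-adjacent pairs inside one class therefore satisfy the saturation condition automatically and force nothing in $W$. Everything built on this claim has no basis: that pairs in $W_e$ with $d_N=2$ need a common edge inside $W$, and that the lack of a dominating vertex prevents one vertex from ``serving all pairs''.

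This false step is also the only place you invoke the hypothesis $\Delta(G)\le n-2$, so your argument never actually uses it. The hypothesis is indispensable. Join one vertex to the Petersen graph in which one vertex has been replaced by an independent set of twins. The result is $K_4$-saturated with $\delta=4$ and only $4n-19$ edges. In your setup the hypothesis does bite, for example, when $G[N]\cong K_{1,3}$: by fact (2) the centre is adjacent to $v$, to the rest of $N$ and to all of $W$, so that case cannot occur.

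The rest of the proposal is intention rather than argument. No discharging rule is specified. The ``hub'' step is waved through ``up to overlaps'' exactly where overlaps can be total. The case analysis on $G[N]$ and the constant $15$ are never derived. The heuristic that vertices of $W$ have total degree close to $8$ on average is also false. In $K_{2,2,n-4}$, which is $K_4$-saturated with $\delta=4$ and $\Delta=n-2$, every vertex of the large part has degree $4$ and the same four neighbours; the surplus sits in $N$.

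What you actually need is
\[
\sum_{w\in W}\bigl(d_N(w)+\tfrac12 d_W(w)\bigr)\ge 4|W|+1-e(N),
\]
so vertices with $d_N(w)\in\{2,3\}$ must be compensated by edges inside $W$. Such edges are forced by two kinds of non-adjacent pairs, whose common neighbourhoods need not contain an edge of $G[N]$:
\begin{itemize}
\item pairs $wa_i$ with $a_i\in N\setminus N(w)$;
\item pairs from different classes.
\end{itemize}
For instance, suppose $G[N]$ has only the edges $a_1a_2,a_3a_4$ and $N(w)\cap N=\{a_1,a_2\}$. The non-edges $wa_3$ and $wa_4$ force edges $xy$ and $x'y'$ with $x,y\in N_W(w)\cap N(a_3)$ and $x',y'\in N_W(w)\cap N(a_4)$. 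Moreover $\{x,y\}\ne\{x',y'\}$, since otherwise $\{a_3,a_4,x,y\}$ would be a $K_4$. Hence $d_W(w)\ge 3$. Even this falls short of the required average, which is why a genuine charging scheme and case analysis, as in \cite{AEHK96}, is needed.
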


A vertex cover of a graph $G$ is a vertex set $S$ such that every edge of $G$ has at least one endpoint in $S$. A minimum vertex cover is a vertex cover of minimum cardinality. The celebrated theorem of K\H{o}nig gives the relationship between minimum vertex covers and maximum matchings.
\begin{theorem}[K\H{o}nig \cite{K31}]\label{thm:Konig}
In any bipartite graph, the size of a maximum matching is equal to
the size of a minimum vertex cover.
\end{theorem}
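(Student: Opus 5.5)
The inequality between the two quantities is immediate, so the plan is to prove it first and then construct a vertex cover whose size equals $\nu(G)$, using alternating paths with respect to a maximum matching.

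\textbf{The easy inequality.} Let $G$ be bipartite with bipartition $(A,B)$, let $M$ be a maximum matching, and let $S$ be any vertex cover. Every edge of $M$ has an endpoint in $S$. Since the edges of $M$ are pairwise disjoint, these endpoints are distinct, so $|S|\ge |M|$. Hence the minimum vertex cover has size at least $\nu(G)$.

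\textbf{Building the cover.} Let $U\subseteq A$ be the set of vertices of $A$ left unmatched by $M$. Let $Z$ be the set of vertices reachable from $U$ by an $M$-alternating path, that is, a path that starts in $U$ and whose edges alternate between $E(G)\setminus M$ and $M$; we include $U\subseteq Z$ via the trivial path. Define
$$K=(A\setminus Z)\cup (B\cap Z).$$
Two things must be checked: that $K$ is a vertex cover, and that $|K|=|M|$.

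\textbf{$K$ is a vertex cover.} Take an edge $ab$ with $a\in A$ and $b\in B$. If $a\notin Z$, then $a\in K$ and we are done, so assume $a\in Z$. Let $P$ be the alternating path from $U$ that reaches $a$.
\begin{itemize}
\item If $ab\notin M$: every alternating path from $U$ leaves $A$ along a non-matching edge, because it starts at an unmatched vertex and then alternates. So $P$ extended by $ab$ is again alternating, giving $b\in Z$.
\item If $ab\in M$: then $a\notin U$, since $a$ is matched. So $P$ is nontrivial, and its last edge must be the matching edge at $a$, which is $ab$. Hence $b$ lies on $P$ and $b\in Z$.
\end{itemize}
In both cases $b\in B\cap Z\subseteq K$.

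\textbf{$|K|=|M|$.} This is the main step.
\begin{itemize}
\item Every $b\in B\cap Z$ is matched. Otherwise the alternating path from $U$ to $b$ would join two unmatched vertices, so it would be $M$-augmenting. Taking the symmetric difference of $M$ with this path would give a larger matching, contradicting the maximality of $M$.
\item The partner of such a $b$ lies in $A\cap Z$, because the alternating path to $b$ extends along its matching edge.
\item Every $a\in A\setminus Z$ is matched, since all unmatched vertices of $A$ lie in $U\subseteq Z$.
\end{itemize}
So each vertex of $K$ is covered by an edge of $M$.

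No edge of $M$ has both endpoints in $K$: if $ab\in M$ with $b\in B\cap Z$, then $a\in Z$ by the second point above, so $a\notin K$. Thus the map sending each vertex of $K$ to its matching edge is injective, which gives $|K|\le |M|$. Combined with the first inequality, $|K|=|M|$, and the theorem follows.

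The only delicate point is the case analysis in the covering step. It relies on the fact that alternating paths from $U$ always enter $A$ via matching edges and leave $A$ via non-matching edges.
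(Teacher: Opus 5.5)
Your proof is correct. It is the standard constructive alternating-path proof of K\H{o}nig's theorem. The parity fact you isolate at the end drives both the covering step and the injectivity step: by bipartiteness, an $M$-alternating path from $U$ sits in $A$ at even positions and in $B$ at odd positions, so it leaves $A$ along non-matching edges and enters $A$ along matching edges.

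One small point of rigour concerns two steps: extending a path $P$ by the edge $ab$, and extending the path ending at $b\in B\cap Z$ by the matching edge at $b$. The result is a path only if the new vertex is not already on it. If it is, the initial segment of the path ending at that vertex already witnesses membership in $Z$, so your conclusions stand. Alternatively, define $Z$ via alternating walks. In a bipartite graph a repeated vertex occurs at positions of equal parity, so cutting out the closed segment between the two occurrences keeps the walk alternating. Hence a shortest alternating walk is a path, which is what the augmenting-path step requires.

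The paper gives no proof of this statement; it simply quotes it from \cite{K31}, so there is no argument to compare against. Note, though, how the paper uses it. The only application is in the case $r=3$, $s>3$: $A_{1,4}\cup\{v_3,v_5\}$ is exhibited as a vertex cover of $G\backslash v_1$ to conclude $\nu(G\backslash v_1)\le |A_{1,4}|+2\le s-2$, and similarly for $A_{3,5}$. That needs only the inequality that no matching is larger than a vertex cover. This is your first paragraph, and it holds in every graph, bipartite or not. The constructive half of your argument is what makes the statement a theorem, but the paper's proof does not depend on it.
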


\begin{proposition}\label{non-adj}
     For $r\ge 3$, every pair of non-adjacent vertices in a $K_r$-saturated graph $G$ has at least $r-2$ common neighbors.
\end{proposition}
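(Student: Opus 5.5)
The statement to prove is Proposition~\ref{non-adj}: in a $K_r$-saturated graph $G$ with $r\ge 3$, any two non-adjacent vertices have at least $r-2$ common neighbors. I would argue directly from the definition of saturation.

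Take non-adjacent vertices $x,y\in V(G)$, so $xy\in E(\bar G)$. By $K_r$-saturation, the graph $G+xy$ contains a copy $K$ of $K_r$. Since $G$ itself is $K_r$-free, $K$ must use the new edge $xy$. Hence both $x$ and $y$ lie in $V(K)$.

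Now let $S=V(K)\setminus\{x,y\}$, which has exactly $r-2$ vertices. Each $z\in S$ is adjacent to both $x$ and $y$ in $K$. These edges $xz$ and $yz$ differ from $xy$, so they are edges of $G$. Therefore $S\subseteq N_G(x)\cap N_G(y)$, and so $|N_G(x)\cap N_G(y)|\ge r-2$.

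The hypothesis $r\ge 3$ only guarantees that $r-2\ge 1$, so the statement is meaningful. The one step that needs care is noting that the new clique must contain the added edge, and this follows immediately from $G$ being $K_r$-free.
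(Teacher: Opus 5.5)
Your proof is correct and follows the same approach as the paper: adding the missing edge $xy$ creates a copy of $K_r$ that must use $xy$ because $G$ is $K_r$-free, and its other $r-2$ vertices are common neighbors of $x$ and $y$ in $G$. You spell out the two steps the paper leaves implicit, namely why the new clique contains $xy$ and why the edges $xz$ and $yz$ already belong to $G$.
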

\begin{proof}
    Let $x,y\in V(G)$ be a pair of non-adjacent vertices. As $G$ is $K_r$-saturated, we have that  $G\cup\{xy\}$ produces a new $K_r$. Thus, $x$ and $y$ have at least $r-2$ common neighbors. 
\end{proof}

\section{Proof of Theorem \ref{thm:rs}} \label{SEC:prelim}

The graphs $K_{r-3}\vee K_{2,n-r+1}$, $G(n,r,s)$ and the graph family $\mathcal{H}(n,s)$ imply that 
\begin{equation*}     
     sat(n,r,s)\leq\left\{\begin{array}{cl}{(r-1)n-\frac{r}{2}(r-1)-1,}&{\quad\mathrm{if}~s=r-1;}\\{(r-1)n + (s-r)^2 - \frac{1}{2}(r+2)(r-3) - 5,}&{\quad\mathrm{if}~s>r-1.}\\\end{array}\right.
\end{equation*} 
It remains to prove the lower bound.

We first prove the case $sat(n,3,2)\geq 2n-4$. Let $G$ be an $n$-vertex $K_3$-saturated graph with $\nu(G)=2$ and $2n-4$ edges. Since $\nu(G)=2$, $G$ contains no odd cycle of length $\ge 7$. Otherwise the matching number would be at least $3$. 
If $G$ contained a $C_5$, then because $n\geq 63(s-2)^2+r^2=9$ and $G$ is $K_3$-saturated, there would exist a vertex outside the $C_5$ with a neighbor in $C_5$.
Then one can find a matching of size $3$ by taking two disjoint edges inside the $C_5$ together with an edge from the external vertex to one of its neighbor in $C_5$, contradicting $\nu(G)=2$. 
Hence $G$ contains no odd cycle and thus $G$ is bipartite.

A $K_3$-saturated bipartite graph must be complete bipartite. Otherwise, if some edge between the two partite sets were missing, adding it could not create a triangle. 
Therefore $G\cong K_{a,b}$ with $a+b=n$.

Now $\nu(G)=\min\{a,b\}=2$, so $\min\{a,b\}=2$. Consequently $e(G)=ab=2(n-2)=2n-4$. 
Hence $\operatorname{sat}(n,3,2)\ge 2n-4$.
\subsection{The basic case:  $r=3$ and $s\geq r$}

Let $G$ be an $n$-vertex $K_3$-saturated graph with $\nu(G)=s$ and $sat(n,3,s)$ edges. 

\begin{claim}\label{lem:C_5}
   The graph $G$ contains at least one cycle of length five.
\end{claim}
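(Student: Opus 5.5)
We argue by contradiction: assume $G$ contains no $C_5$. The first step is to show that $G$ must then be bipartite; the second is to observe that a bipartite $K_3$-saturated graph is complete bipartite, which contradicts the edge count.

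\textbf{Step 1: the girth and the diameter of $G$.} Since $G$ is $K_3$-free, there is no $C_3$. Since $G$ is $K_3$-saturated, any two non-adjacent vertices have a common neighbor (Proposition \ref{non-adj} with $r=3$). Hence $G$ is connected with diameter at most $2$; we may assume $n\ge 3$.

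\textbf{Step 2: $G$ is bipartite.} Take a shortest odd cycle $C=v_1v_2\cdots v_{2k+1}$. By the previous step its length is at least $5$, and by assumption it is at least $7$, so $k\ge 3$. A shortest odd cycle is an induced subgraph and is isometric: for vertices of $C$, distances in $G$ equal distances along $C$. The reason is that a shorter path between two vertices of $C$ would combine with one of the two arcs of $C$ to give a shorter odd closed walk. Since $k\ge 3$, the vertices $v_1$ and $v_4$ are at distance $3$ along $C$, hence at distance $3$ in $G$. This contradicts diameter at most $2$. So $G$ has no odd cycle and is bipartite.

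\textbf{Step 3: contradiction with minimality.} As in the case $sat(n,3,2)$ treated above, a bipartite $K_3$-saturated graph is complete bipartite. Indeed, adding a missing cross edge cannot create a triangle, and adding an edge inside a part fails only if the endpoints have a common neighbor, which in a connected bipartite graph holds once all cross edges are present. So $G\cong K_{a,b}$ with $a+b=n$ and $\min\{a,b\}=\nu(G)=s$, which gives
\[
e(G)=s(n-s).
\]
For $s\ge 3$ and $n\ge 63(s-2)^2+9$, this exceeds the upper bound $2n+(s-3)^2-5$ given by $G(n,3,s)$ (or by the family $\mathcal{H}(n,3)$ when $s=3$). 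Indeed, $s(n-s)-2n-(s-3)^2+5=(s-2)n-s^2-(s-3)^2+5>0$ for such $n$. This contradicts the choice of $G$ as having $sat(n,3,s)$ edges. Therefore $G$ contains a $C_5$.

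The main obstacle is the isometry claim in Step 2. It is standard, but it should be stated carefully: a shortcut path $P$ between $v_i$ and $v_j$ shorter than $d_C(v_i,v_j)$ yields, together with the appropriate arc, a closed walk of odd length strictly less than $2k+1$, and every odd closed walk contains an odd cycle. The remaining inequality in Step 3 is routine arithmetic.
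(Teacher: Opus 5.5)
Your proof is correct and follows essentially the same route as the paper: the bipartite case forces $G\cong K_{a,b}$, which has too many edges. Otherwise, a shortest odd cycle of length at least $7$ is ruled out because two vertices far apart on it have a common neighbor. You package this via diameter at most $2$ and the isometry of a shortest odd cycle (using $v_1,v_4$), whereas the paper uses $v_1,v_{k+1}$ and splits on whether the common neighbor lies on $C_0$. You also verify the inequality $s(n-s)>2n+(s-3)^2-5$ explicitly, which the paper leaves implicit.
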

\begin{proof}
    Suppose that $G$ does not contain any odd cycles. Then $G$ is a bipartite graph with two parts $A$ and $B$. Moreover, we claim that $G$ is a complete bipartite graph. 
    If $G$ is not a complete bipartite graph, then we let $a\in A$, $b\in B$ be a pair of non-adjacent vertices of $G$.   
    By Proposition \ref{non-adj}, the vertex pair $a b$ has a common neighbor, which contradicts our assumption that $A$ and $B$ form a partition of $V(G)$ such that $G$ is bipartite.
    Thus $G$ is a complete bipartite graph.
    Since the matching number  of $G$ is $s$, we have $\min\{A,B\}\geq s$. Then 
    $$ e(G)\geq s(n-s)>e(G(3,s)),$$ 
    which contradicts our choice of $G$.

    Therefore $G$ contains an odd cycle of length at least five. Let $C_0$ be a shortest odd cycle, and write $V(C_0)=\{v_1,\dots,v_{2k+1}\}$. If $|C_0|=5$, then we are done. Thus we may assume $2k+1\ge 7$. Consider the vertices $v_1$ and $v_{k+1}$. If $v_1\sim v_{k+1}$, then $C_0$ is not a shortest odd cycle, a contradiction. Hence $v_1$ and $v_{k+1}$ are non-adjacent. By Proposition \ref{non-adj}, they have a common neighbor $v_c$. If $v_c\notin V(C_0)$, then either $\{v_1,\dots,v_{k+1}, v_c\}$ or 
    $\{v_{k+1},\dots,v_{2k+1}, v_1, v_c\}$ contains a shorter odd cycle. If $v_c\in V(C_0)$, then there is a shorter odd cycle inside $C_0$. Both cases contradict the choice of $C_0$. Thus the shortest odd cycle has length five.
\end{proof}

Let $C_5$ be a cycle of length five in $G$ with vertex set $\{v_1,v_2,v_3,v_4,v_5\}$. Since $G$ is triangle-free, every vertex in $V(G)\backslash V(C_5)$ is adjacent to at most two vertices of $C_5$. We partition the remaining vertices of $G$ as follows (see Figure \ref{fig:claim:basic}).

\begin{enumerate}[label=(\roman*)]
            \item $A=\{v:d_{C_5}(v)=2,v\notin V(C_5)\}$.
            \item $B=\{v:d_{C_5}(v)=1,v\notin V(C_5)\}$.  
             \item $C=\{v:d_{C_5}(v)=0,v\notin V(C_5)\}$.  
 \end{enumerate}
Clearly, $A$, $B$, $C$ are pairwise disjoint sets and $|A|+|B|+|C|=n-5$.
We can further partition $A$ into subsets
$$A_{i,j}=\{v:v\sim v_i, v\sim v_j, v\in A\},$$ 
where $1\leq i< j\leq 5$.
As $G$ is triangle-free, each vertex in $A$ must be adjacent to two non-consecutive vertices of $C_5$. Thus
$$A=A_{1,3}\cup A_{1,4}\cup A_{2,4}\cup A_{2,5}\cup A_{3,5}.$$
Similarly, we partition $B$ into subsets
$$B_i=\{v: v\in B,v\sim v_i \}.$$
Note that each of $A_{i,j}$ and $B_i$ is an independent set. Meanwhile, $G[V(C_5)]$ has five edges and 
\begin{equation}
e(A,V(C_5))=2|A|.\tag{1}
\end{equation}
For a vertex $v\in B$, we may assume $v\sim v_1$.  By Proposition \ref{non-adj}, the vertex pair $v$ and $v_3$ has at least one common neighbor $z_1$, and the pair $v$ and $v_4$ has one common neighbor  $z_2$. Since $G$ is  $K_3$-free, $z_1$ and $z_2$ are distinct.  As $C$ consists of vertices adjacent to no vertex of $C_5$, both $z_1$ and $z_2$ lie in $A$ or $B$. Thus $d_{A\cup B}(v)\geq 2$ and  
\begin{equation}
e(B)+e(A,B)+e(B,V(C_5))\geq 2|B|.\tag{2}
\end{equation}

\begin{figure}[htbp]
    \centering
   \begin{tikzpicture}[
    vertex/.style={circle, draw, inner sep=1.5pt, fill=white, font=\small},
    port/.style={circle, draw, inner sep=1pt, fill=white},
]

\node[regular polygon, regular polygon sides=5, minimum size=3cm, 
      draw, shape border rotate=90] (pent) {};

\node[vertex] (v1) at (pent.corner 1) {$v_1$};
\node[vertex] (v2) at (pent.corner 2) {$v_2$};
\node[vertex] (v3) at (pent.corner 3) {$v_3$};
\node[vertex] (v4) at (pent.corner 4) {$v_4$};
\node[vertex] (v5) at (pent.corner 5) {$v_5$};

\def\boxwidth{2.0cm}
\def\boxheight{3.0cm}

\coordinate (boxSW) at ($(pent.east) + (1.0cm, -\boxheight/2)$);
\coordinate (boxNE) at ($(boxSW) + (\boxwidth, \boxheight)$);

\draw[thick] (boxSW) rectangle (boxNE);

\coordinate (line1) at ($(boxSW)!1/3!(boxNE)$); 
\coordinate (line2) at ($(boxSW)!2/3!(boxNE)$); 

\draw[thick] (boxSW |- line1) -- (boxNE |- line1);
\draw[thick] (boxSW |- line2) -- (boxNE |- line2);

\coordinate (centerA) at ($(boxNE)!1/6!(boxSW)$); 
\coordinate (centerB) at ($(boxSW)!1/2!(boxNE)$); 
\coordinate (centerC) at ($(boxSW)!1/6!(boxNE)$); 

\coordinate (portLine) at ($(boxSW)!0.3!(boxNE)$);   
\coordinate (letterLine) at ($(boxSW)!0.7!(boxNE)$); 

\coordinate (portA) at (portLine |- centerA);
\coordinate (portB) at (portLine |- centerB);
\coordinate (portC) at (portLine |- centerC);

\node[port] (pA) at (portA) {};
\node[port] (pB) at (portB) {};
\node[port] (pC) at (portC) {};

\coordinate (letterA) at (letterLine |- centerA);
\coordinate (letterB) at (letterLine |- centerB);
\coordinate (letterC) at (letterLine |- centerC);

\node at (letterA) {A};
\node at (letterB) {B};
\node at (letterC) {C};

\draw (v3) -- (pA); 
\draw (v5) -- (pA); 
\draw (v4) -- (pB); 
\end{tikzpicture}
\caption{Partition of V(G)}
    \label{fig:claim:basic}
\end{figure}

\begin{claim}\label{cl:C}
 $|C|\leq (s-3)^2$.
\end{claim}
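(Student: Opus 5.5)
We have to show $|C|\le (s-3)^2$ using $\nu(G)=s$ together with triangle-freeness and saturation. The plan is to first build a large matching covering $C_5$ and the neighbourhood of $C$, and then show that the graph left over inside $C$ is very restricted.

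\emph{Structure of $C$ and a matching of size $s$.} Each $v\in C$ is non-adjacent to $v_1$, so by Proposition \ref{non-adj} it has a common neighbour with $v_1$. That common neighbour lies in $N(v_1)\setminus V(C_5)$, hence in $A\cup B$. So every vertex of $C$ has a neighbour in $A\cup B$. Doing the same with $v_2,\dots,v_5$, each $v\in C$ has neighbours in $A\cup B$ that are adjacent to every $v_i$. These neighbours are distinct for non-adjacent $v_i$, since $G$ is triangle-free, so $d_{A\cup B}(v)\ge 3$. Now choose a maximum matching $M$ in the bipartite graph between $C$ and $A\cup B$, and let $m=|M|$. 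The vertices of $A\cup B$ matched by $M$ avoid $C_5$. Two disjoint edges inside $C_5$ therefore give a matching of size $m+2$, and if some vertex of $A\cup B$ is not used by $M$ we can often also match the fifth cycle vertex. This yields $m\le s-2$, and typically $m\le s-3$.

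\emph{Applying König.} Theorem \ref{thm:Konig} applied to the bipartite graph $(C, A\cup B)$ gives a vertex cover $S$ with $|S|=m\le s-2$. Write $S=S_C\cup S_{AB}$. Each vertex of $C\setminus S_C$ sends all its edges into $A\cup B$ only to $S_{AB}$, so it has at least $3$ neighbours in $S_{AB}$. Next look at the edges inside $C$, i.e.\ $G[C]$. Any edge of $G[C]$ disjoint from $S_C$, together with $M$ and two edges of $C_5$, still gives a matching of size $m+3$, as long as the $M$-edges can be rerouted. Using a maximum matching of $G[C\cup A\cup B]$ directly is cleaner: $\nu(G[V\setminus V(C_5)])\le s-2$. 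So I would instead apply the Gallai–Edmonds / Tutte–Berge structure, or simply the greedy bound, to $G-V(C_5)$. That graph has a cover-like set $T$ of size at most $2(s-2)$ such that $C\setminus T$ is independent and all its neighbours lie in $T$.

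\emph{Counting.} Vertices of $C\setminus T$ are pairwise non-adjacent. By Proposition \ref{non-adj}, any two of them have a common neighbour, which must lie in $T\cap(A\cup B)$. They are also non-adjacent to the $v_i$. The aim is to encode each $x\in C\setminus T$ by its neighbourhood in $T\cap(A\cup B)$ and show that distinct vertices of $C\setminus T$ can be merged. If two such vertices $x,y$ have the same neighbourhood, deleting one keeps $G$ saturated and reduces the edge count by at least $3$. Combined with the extremality of $G$ (the minimality of $e(G)$ compared with $G(n,3,s)$), this should bound $|C|$. A more robust route is the direct counting: the vertices in $T\cap(A\cup B)$ each have at most two cycle neighbours, and there are at most $s-3$ of them in each "side" once one uses that a matched pair together with $C_5$ forces a further matching edge. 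Thus $C$ injects into pairs from an $(s-3)\times(s-3)$ grid, namely neighbourhoods forced by pairwise common neighbours, giving $(s-3)^2$.

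\emph{Main obstacle.} The main difficulty is pinning down exactly why the relevant cover has two parts of size at most $s-3$ each, and why the vertices of $C$ are determined by one element from each part. I would first try to use $v_1$ and $v_3$: for $x\in C$, take the common neighbour $a_x\in N(v_1)$ and $b_x\in N(v_3)$. I would then show that the sets $\{a_x\}$ and $\{b_x\}$ each span at most $s-3$ vertices, using the fact that disjoint edges $xa_x$ together with $C_5$-edges give a matching. Finally, I would show that $x\mapsto(a_x,b_x)$ is injective, else extremality allows removal. I expect this step to be where the argument may need adjusting.
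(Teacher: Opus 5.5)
\textbf{There is a genuine gap.} The bound $|C|\le (s-3)^2$ does not follow from $\nu(G)=s$, triangle-freeness and saturation alone. So no argument built only on matchings, K\H{o}nig covers, or an injection $x\mapsto(a_x,b_x)$ into an $(s-3)\times(s-3)$ grid can prove it.

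Here is a counterexample. Take the Gr\"otzsch graph: the cycle $v_1\cdots v_5$, vertices $a_i$ adjacent to $v_{i-1},v_{i+1}$, and an apex $w$ adjacent to $a_1,\dots,a_5$. Replace $w$ by $t$ independent copies with the same neighbourhood.
\begin{itemize}
\item The result is triangle-free and every non-adjacent pair has a common neighbour, so it is $K_3$-saturated.
\item For $t\ge 4$ its matching number is $7$. Every matching edge uses one of the ten non-copies, edges among those ten use two of them, and only five edges can go to copies.
\item Relative to $v_1\cdots v_5$ we have $A=\{a_1,\dots,a_5\}$, $B=\emptyset$ and $|C|=t=n-10$, far above $(s-3)^2=16$.
\end{itemize}
All vertices of $C$ are twins here, so your map $x\mapsto(a_x,b_x)$ is constant.

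Your fallback of deleting a twin and invoking extremality is not carried out, and it does not work as stated. It changes $n$, it may drop $\nu$ to $s-1$, and it would need the theorem for $n-1$, i.e.\ an induction on $n$ you have not set up. Even then, $sat(n-1,3,s-1)=2n+(s-3)^2-2s$ gives no contradiction for $s\ge 4$. The claim that the $a_x$ span at most $s-3$ vertices is also unproved; your matching argument gives only $s-2$.

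\textbf{What is missing.} The claim holds only because $e(G)$ is minimal, and the proof is a direct edge count using the ingredient you already have: $d_{A\cup B}(v)\ge 3$ for $v\in C$. (The common neighbours must differ for \emph{adjacent} $v_i$, not non-adjacent ones; the bound $3$ comes from each outside vertex seeing at most two cycle vertices.) Count over disjoint edge sets:
\begin{itemize}
\item $e(C_5)=5$;
\item $e(A,V(C_5))=2|A|$;
\item each $B$-vertex has one cycle neighbour and at least two neighbours in $A\cup B$ (common neighbours with $v_{i+2}$ and $v_{i+3}$), so $E(B)\cup E(A,B)\cup E(B,V(C_5))$ has at least $2|B|$ edges;
\item $e(C,A\cup B)\ge 3|C|$.
\end{itemize}
Hence $e(G)\ge 5+2|A|+2|B|+3|C|=2n-5+|C|$. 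Since $e(G)=sat(n,3,s)\le e(G(n,3,s))=2n+(s-3)^2-5$, you get $|C|\le (s-3)^2$. The quantity $(s-3)^2$ is just the edge surplus of the construction $G(n,3,s)$ over $2n-5$, not the size of any grid.
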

\begin{proof}
Suppose to the contrary that $|C|>(s-3)^2$. Since any $v\in C$ has no neighbor in $C_5$, by Proposition \ref{non-adj}, $v$ has at least one common neighbor with each vertex of $C_5$. For $1\leq i\leq 5$, denote a common neighbor of $v$ and $v_i$ by $x_i$. Then $x_i\in A\cup B$. Since $A$ consists of vertices adjacent to exactly two vertices of $C_5$, and $B$ consists of vertices adjacent to
exactly one vertex of $C_5$, the set $\{x_1,x_2,x_3,x_4,x_5\}$ contains at least three distinct vertices. This implies that each $v\in C$ has at least three neighbors in $A\cup B$ and 
\begin{equation}
e(C,A\cup B)\geq 3C.\tag{3}
\end{equation}
Combining this with (1) and (2), we have 
\begin{align*}
e(G)
&\geq e(G[V(C_5)]) +e(A,V(C_5))+e(B)+e(A,B)+e(B,V(C_5))+e(C,A\cup B)\\
&\geq 5+2|A|+2|B|+3|C|\\
&=2n-5+|C|\\
&>2n+(s-3)^2-5\\
&=e(G(n,3,s)),
\end{align*}
a contradiction. Thus $|C|\leq (s-3)^2$.
\end{proof}

\begin{claim}\label{cl:B}
 $|B|\leq 52(s-3)^2$.
\end{claim}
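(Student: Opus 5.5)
We argue by contradiction, in the same spirit as Claim \ref{cl:C}: we assume $|B|>52(s-3)^2$ and show that $e(G)$ then exceeds $e(G(n,3,s))=2n+(s-3)^2-5$. Estimate (2) only guarantees two edges per vertex of $B$, which gives the baseline $e(G)\ge 2n-5+|C|$. So the task is to find an extra amount of order $|B|/52$ edges beyond this baseline. The natural source is the matching number. A large $B$ produces many edges that are not counted in (1)--(3), and since $\nu(G)=s$ these edges must concentrate on few vertices. If they did, they would force a high-degree structure that is itself expensive.

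\textbf{Step 1: pigeonhole on $B$.} Partition $B=B_1\cup\dots\cup B_5$. Some $B_i$, say $B_1$, has $|B_1|>\frac{52}{5}(s-3)^2$. Fix $v\in B_1$. As in the derivation of (2), $v$ has a common neighbour $z_1$ with $v_3$ and a common neighbour $z_2$ with $v_4$, and $z_1,z_2\in A\cup B$. Note that $z_1\sim v_3$ and $z_1\not\sim v_1$, so $z_1\in A_{3,5}\cup B_3$ (or $A_{1,3}$ is excluded by triangle-freeness). Similarly $z_2\in A_{2,4}\cup B_4$. Hence every vertex of $B_1$ sends at least one edge into $X_3:=A_{3,5}\cup B_3$ and at least one into $X_4:=A_{2,4}\cup B_4$.

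\textbf{Step 2: use $\nu(G)=s$ to bound vertex covers.} Consider the bipartite graph between $B_1$ and $X_3$. By K\H{o}nig's theorem (Theorem \ref{thm:Konig}), if its maximum matching had size $m$, then together with two edges of $C_5$ avoiding $v_1,v_3$ we could build a larger matching. In any case $m\le s$, so there is a vertex cover $S_3$ of size at most $s$. The same holds for $X_4$, giving a cover $S_4$. All but at most $2s$ vertices of $B_1$ therefore have their $X_3$- and $X_4$-neighbours inside the small sets $S_3\cap X_3$ and $S_4\cap X_4$. So a small set $T$ of size at most $2s$ is adjacent to almost all of $B_1$.

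\textbf{Step 3: saturation applied to $T$, and the count.} Take $x\in T\cap X_3$ adjacent to at least $|B_1|/s$ vertices of $B_1$. These neighbours form an independent set. Non-adjacent vertices of $B$ lying in different parts of $N(x)$, and vertices of $C\cup A$ that avoid $x$, need common neighbours by Proposition \ref{non-adj}. The plan is to show that each of these vertices has degree at least $3$, or that we gain additional edges from $x$ to the other parts. I would use a \emph{discharging} count: give each vertex of $B$ its two edges from (2), and show that the structure found above forces at least $|B|/52-O(s^2)$ further edges not counted in (1)--(3). Then
\[
e(G)\ge 2n-5+|C|+\tfrac{1}{52}|B|-c(s-3)^2>2n+(s-3)^2-5,
\]
which is the desired contradiction. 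The constant $52$ should come from combining the factor $5$ (the choice of $B_i$), the factor about $2s$ (the covers), and the slack in the count.

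\textbf{Main obstacle.} The hard step is Step 3: producing genuinely \emph{new} edges rather than ones already counted in (2). The difficulty is that $x$ may be adjacent to many vertices of $B_1$ at no extra cost, as happens in the extremal graph, where high-degree vertices do exactly this. I would first try to exploit the fact that $B_1$-vertices $u$ and the vertex $v_2$ (or $v_5$) are non-adjacent. The common neighbour they require must lie in $A_{2,4}\cup A_{2,5}\cup B_2$ (respectively the analogous set for $v_5$), which gives each $u\in B_1$ a third neighbour outside $\{v_1,z_1,z_2\}$. If that neighbour is always shared, its degree becomes large, and matching it against $v_2$'s side again violates $\nu(G)=s$. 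This should yield degree at least $3$ for all but $O(s^2)$ vertices of $B$, and hence the required surplus.
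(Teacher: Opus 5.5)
Your Steps 1 and 2 are correct. In fact $|S_3|\le s-2$: $B_1\cup X_3$ is disjoint from $V(C_5)$, so any two disjoint edges of $C_5$ can be added to a matching between them. (Only $v_4v_5$ avoids both $v_1$ and $v_3$, so your phrasing there is off.) The gap is Step 3, which is only a plan. It is exactly where a surplus of more than $(s-3)^2$ edges over the baseline $2n-5+|C|$ must be produced. The one concrete mechanism you propose fails: a vertex $u\in B_1$ and $v_2$ (or $v_5$) already have the common neighbour $v_1$. So Proposition \ref{non-adj} forces nothing, and no third neighbour of $u$ arises this way. Your account of the constant is also a guess; in the paper $52=2+50$, from at most $2(s-3)^2$ ``bad'' and at most $50(s-3)^2$ ``good'' vertices of $B$.

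The obstacle you name is a misdiagnosis, and correcting it supplies the missing idea. Behind (2) is the identity $e(B)+e(A,B)=\sum_{v\in B}\bigl(d_A(v)+\tfrac12 d_B(v)\bigr)$, in which every summand is at least $1$. So a vertex $x\in B_3$ with $D$ neighbours in $B_1$ contributes $D/2$ rather than $1$. Likewise, each $u\in B_1$ with a neighbour in $A_{3,5}$ contributes at least $3/2$. Concentration on $x$ \emph{is} the surplus. The high-degree vertices of the extremal graph that cost nothing are $v_1,v_3\in V(C_5)$, and their edges are paid for in (1), not (2).

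The paper uses exactly this bookkeeping and never invokes $\nu(G)=s$. Vertices of $B$ with an $A$-neighbour gain $\tfrac12$ each, so there are at most $2(s-3)^2$ of them. Suppose some $B_1'$ of good vertices has more than $10(s-3)^2$ elements. Then their edges into $B_3$ either concentrate, giving a surplus at the receiving vertices, or they yield $X\subseteq B_3'$ and $Y\subseteq B_1'$ of size $2(s-3)^2+1$ with $e(X,Y)=0$. In the latter case each non-adjacent pair $(x_i,y_i)$ needs a common neighbour, forcing $d_{A\cup B}(x_i)+d_{A\cup B}(y_i)\ge 5$.

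With this bookkeeping, your K\H{o}nig step would finish the job on its own for $s\ge 4$, which would be a genuinely different route. The excess of $e(B)+e(A,B)$ over $|B|$ is at least $\tfrac12|B_1\setminus S_3|-|S_3\cap B_3|\ge \tfrac12|B_1|-(s-2)$. This exceeds $(s-3)^2$ once $|B_1|>\tfrac{52}{5}(s-3)^2$. For $s=3$ you would need something extra. For example, $C=\emptyset$ by Claim \ref{cl:C}, so any edge of $G-V(C_5)$ extends to a matching of size $4$; hence $G-V(C_5)$ is edgeless and $B=\emptyset$. As written, though, the proposal contains neither argument.
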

\begin{proof}
Let $v$ be a vertex of $B_i$, where $1\leq i \leq 5$. Clearly, $v_i\in N(v)$.  We say that $v$ is ``good'' if both the common neighbors of $v$, $v_{i+2}$ lie in $B$ and the common neighbors of $v$, $v_{i+3}$ lie in $B$; here the indices are taken modulo $5$. Otherwise, we call it ``bad''. Since each bad vertex has at least one neighbor in $A$, if the cardinality of bad vertices is larger than $2(s-3)^2$,
then the inequality (2) can be modified by
$$e(B)+e(A,B)+e(B,V(C_5))> 2|B|+(s-3)^2.$$
Combining this with (1) and (3), we have
\begin{align*}
e(G)
&\geq e(G[V(C_5)]) +e(A,V(C_5))+e(B)+e(A,B)+e(B,V(C_5))+e(C,A\cup B)\\
&> 5+2|A|+2|B|+(s-3)^2+3|C|\\
&\geq 2n+(s-3)^2-5\\
&=e(G(n,3,s)),
\end{align*}
a contradiction. Thus the cardinality of bad vertices is at most $2(s-3)^2$. 

Let $B_g$ be the set of good vertices and $B_i'=B_g\cap B_i$. Assume that the cardinality of good vertices is more than $50(s-3)^2$. Then there exists a $B_i'$, say $B_1'$, has more than $10(s-3)^2$ vertices. 
As each vertex of $B_1'$ has at least a neighbor in $B_3$, we have 
$$\sum_{v\in B_3}d_{A\cup B}(v)\geq |B_1'|> 10(s-3)^2.$$ 
Recall that every vertex of $B$ has degree at least $2$ in $A\cup B$. If $|B_3|\leq 4(s-3)^2$, then
\begin{align*}
e(B) + e(A, B)\geq \frac{1}{2}\sum_{v\in B}d_{A\cup B}(v)
&=
\frac{1}{2}\sum_{v\in B_3}d_{A\cup B}(v)+\frac{1}{2}\sum_{v\in B\backslash B_3}d_{A\cup B}(v).\\
&> 5(s-3)^2+|B\backslash B_3|\\
&\geq |B|+(s-3)^2.
\end{align*}
Again we have $e(B)+e(A,B)+e(B,V(C_5))> 2|B|+(s-3)^2$, and hence $e(G)>e(G(n,3,s))$, a contradiction. Therefore $|B_3|> 4(s-3)^2$ and $|B_3'|> 2(s-3)^2$, since the number of bad vertices is at most $2(s-3)^2$.

Let $X=\{x_1,x_2,\ldots,x_{2(s-3)^2+1}\}$ be a subset of $B_3'$.
Then at most $8(s-3)^2$ vertices of $B_1'$ have a neighbor in $X$. Otherwise, $\sum_{v\in X}d_{A\cup B}(v) > 8(s-3)^2$, and the same estimate gives $e(B)+e(A,B)> |B|+(s-3)^2$, which again implies $e(G)>e(G(n,3,s))$, a contradiction. Thus we can choose a set $Y=\{y_1,y_2,\ldots,y_{2(s-3)^2+1}\}\subseteq B_1'$ such that $e(X,Y)=0$.
By Proposition \ref{non-adj} and by the definition of good vertices, for each $1\leq i\leq 2(s-3)^2+1$, the vertex $x_i$ has a neighbor in $B_1\backslash Y$ and a neighbor in $B_5$, while $y_i$ has a neighbor in $B_3\backslash X$ and a neighbor in $B_4$. Moreover, $x_i$ and $y_i$ have a common neighbor in $V(G)\backslash V(C_5)$. It follows that
$d_{A\cup B}(x_i)+d_{A\cup B}(y_i)\geq 5$, and hence $\sum_{v\in X\cup Y}d_{A\cup B}(v) > 10(s-3)^2+5$. Again we obtain $e(B)+e(A,B)> |B|+(s-3)^2$ and thus $e(G)>e(G(n,3,s))$, a contradiction. Thus the number of good vertices is at most $50(s-3)^2$.
\end{proof}
    
Claim \ref{cl:C} and Claim \ref{cl:B} imply that
\begin{equation*}
|A|\geq n-53(s-3)^2.
\end{equation*}

Recall that $n> 63(s-2)^2$.
Thus $|A|>10(s-3)^2 $ and there exists an $A_{i,j}$, say $A_{1,3}$, has more than $2(s-3)^2$ vertices. 

\begin{claim}\label{cl:ABC}
 $B=C=A_{2,4}=A_{2,5}=\emptyset$.   
\end{claim}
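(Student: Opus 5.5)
The plan is to reuse the edge-counting inequality from Claims \ref{cl:C} and \ref{cl:B}. We have
\[
e(G)\ge 5+2|A|+2|B|+3|C| = 2n-5+|C| ,
\]
and $e(G)=sat(n,3,s)\le e(G(n,3,s))=2n+(s-3)^2-5$. So the edges of $G$ not used in inequalities (1)--(3) number at most $(s-3)^2$. These \emph{uncounted} edges are essentially the edges inside $A$, together with any surplus edges at $B$ and $C$ beyond the two (resp.\ three) already charged. The strategy is to show that a single vertex of $B\cup C\cup A_{2,4}\cup A_{2,5}$ forces roughly $|A_{1,3}|/2>(s-3)^2$ uncounted edges, which is a contradiction.

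First take $u\in A_{2,4}$ (the case $A_{2,5}$ is symmetric, swapping the roles of $v_1$ and $v_3$). For each $a\in A_{1,3}$, the neighbours of $a$ on $C_5$ are $v_1,v_3$ and those of $u$ are $v_2,v_4$, so $a$ and $u$ have no common neighbour on $C_5$. Hence either $a\sim u$, or, by Proposition \ref{non-adj}, they have a common neighbour $w_a\notin V(C_5)$. In the first case $au$ is an edge inside $A$, hence uncounted. In the second case $aw_a$ is an edge at $a$ not going to $C_5$. If $w_a\in A$ it is uncounted. If $w_a\in B$, then $w_a$ is adjacent to both $a$ and $u$, which lie in $A$, together with its $C_5$-neighbour. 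I will argue that such a $w_a$ carries at least one edge beyond the two charged to it in (2), and more once it has several $A_{1,3}$-neighbours. Then $w_a\in C$ is handled the same way using (3). Mapping $a\mapsto au$ or $a\mapsto aw_a$ is injective in $a$. After accounting for the at most factor-two overlap with charges in (2)--(3), this yields more than $|A_{1,3}|/2>(s-3)^2$ uncounted edges, contradicting the upper bound on $e(G)$.

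Next take $v\in B_i$ or $v\in C$, and run the same argument against $A_{1,3}$. If $i\in\{2,4,5\}$, or $v\in C$, then $v$ and $a\in A_{1,3}$ share no $C_5$-neighbour. For $v\in B_1$ or $B_3$, $v$ is adjacent to $a$'s $C_5$-neighbour, but then I pair $v$ with $v_3$ (resp.\ $v_1$), or use that $v\not\sim a$ would force further structure. I expect to exploit that $v$ needs common neighbours with two nonadjacent cycle vertices, which must lie in $A\cup B$. Either way, each $a\in A_{1,3}$ contributes an edge at $a$ outside $C_5$, so again more than $(s-3)^2$ uncounted edges arise. This gives $B=C=\emptyset$.

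The main obstacle is the bookkeeping: making sure the edges $aw_a$ with $w_a\in B\cup C$ are genuinely \emph{extra} relative to (2) and (3), which already charge two or three edges to $w_a$. I would handle this by splitting according to whether $d_{A\cup B}(w)$ exceeds its charged amount, as in the proof of Claim \ref{cl:B}. A vertex $w$ receiving $k$ edges from $A_{1,3}$ contributes at least $k-2$ surplus. Summing, the surplus is at least $|A_{1,3}|-2\cdot\#\{w\}$. If the number of such $w$ is itself large, the uncounted edges at those $w$ again exceed $(s-3)^2$.
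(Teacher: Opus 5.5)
Your counting for $A_{2,4}$, $A_{2,5}$, $B_2$, $B_4$, $B_5$ and $C$ is the paper's idea. Such a vertex shares no cycle neighbour with any $a\in A_{1,3}$, so Proposition~\ref{non-adj} forces an edge at $a$ off $C_5$, and $|A_{1,3}|>2(s-3)^2$ gives too many edges.

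\textbf{The gap is $B_1\cup B_3$.} If $v\in B_1$ and $a\in A_{1,3}$, then $a\not\sim v$ (else $vv_1a$ is a triangle), and $v_1$ is already a common neighbour. So Proposition~\ref{non-adj} imposes nothing at $a$. Your assertion that ``each $a\in A_{1,3}$ contributes an edge at $a$ outside $C_5$'' has no basis here, and no count against $A_{1,3}$ can work. Pairing $v$ with $v_3$ does not help either: the common neighbour must lie in $A_{3,5}\cup B_3$, which gives no contradiction.

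The missing idea is to handle this case last and structurally, as the paper does. Once $A_{2,4}=B_4=\emptyset$ is known, take a common neighbour $z$ of $v$ and $v_4$. It lies off the cycle, since the only cycle neighbour of $v$ is $v_1\not\sim v_4$, and it is adjacent to $v_4$. Hence $z\in A_{1,4}$, and $vv_1z$ is a triangle. For $v\in B_3$, use $v_5$ and $A_{2,5}=B_5=\emptyset$; this forces $z\in A_{3,5}$ and the triangle $vv_3z$.

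\textbf{Your bookkeeping also does not close as written.} Measuring against $2n-5+|C|$ discards one unit per $C$-vertex. Combining a surplus of $k-2$ at each $w$ with one unit per $w$ gives only $\max\{1,k-2\}\ge k/3$ per $w$, i.e.\ roughly $|A_{1,3}|/3$ in total, which $|A_{1,3}|>2(s-3)^2$ does not beat. Use instead the exact identity
\[
e(G)-(2n-5)=e(A)+e(C)+\sum_{w\in B}\Bigl(d_A(w)+\tfrac12 d_B(w)-1\Bigr)+\sum_{w\in C}\bigl(d_{A\cup B}(w)-2\bigr),
\]
all of whose terms are nonnegative. Its terms bound the contributions as follows:
\begin{itemize}
\item Edges from $A_{1,3}$ into $A$ contribute fully to $e(A)$.
\item A vertex $w\in B$ contributes at least $\tfrac12 d_A(w)$, since $d_{A\cup B}(w)\ge 2$.
\item A vertex $w\in C$ contributes at least $d_{A_{1,3}}(w)$. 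Its common neighbours with $v_4$ and with $v_5$ are two further, distinct neighbours in $(A\cup B)\setminus A_{1,3}$.
\end{itemize}
Every $a\in A_{1,3}$ has a neighbour off $C_5$, so the surplus is at least $\tfrac12|A_{1,3}|>(s-3)^2$, which is the desired contradiction.

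Your concern about overlap is well founded. The paper's displayed inequality adds $\frac12\sum_{u\in A_{1,3}}d_{V(G)\setminus V(C_5)}(u)$ on top of $e(A,B)$ and $e(C,A\cup B)$ without addressing it.
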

\begin{proof}
Suppose that there is a vertex $v\in A_{2,4}$. Then by Proposition \ref{non-adj}, for each vertex $u\in A_{1,3}$, either $u$ is adjacent to $v$ or $u$ and $v$ have a common neighbor in $V(G)\backslash V(C_5)$. So $d_{V(G)\backslash V(C_5)}(u)\geq 1$ and $\sum_{u\in A_{1,3}}d_{V(G)\backslash V(C_5)}(u)\geq |A_{1,3}|>2(s-3)^2$. Combining this with (1),(2) and (3), we have  
\begin{align*}
e(G)
&\geq e(V(C_5))+e(A,V(C_5))+\frac{1}{2}\sum_{u\in A_{1,3}}d_{V(G)\backslash V(C_5)}(u)\\
&~~~+e(B)+e(A,B)+e(B,V(C_5))+e(C,A\cup B)\\
&>2n-5+(s-3)^2\\
&=e(G(n,3,s)),
\end{align*}
a contradiction. Thus $A_{2,4}=\emptyset$. The same argument with $A_{2,5}$ in place of $A_{2,4}$ gives $A_{2,5}=\emptyset$. Moreover, applying the preceding edge-counting argument to a vertex of $B_2$, $B_4$, $B_5$, or $C$ gives $B_2=B_4=B_5=C=\emptyset$.

It remains to show that $B_1=B_3=\emptyset$. Suppose that there is a vertex $v\in B_1$. By Proposition \ref{non-adj}, the vertices $v$ and $v_4$ have a common neighbor, say $z$. Since $A_{2,4}=B_4=\emptyset$, we have $z\in A_{1,4}$. Then $v,v_1,z$ form a triangle, a contradiction. Hence $B_1=\emptyset$, and the same argument gives $B_3=\emptyset$.
\end{proof}

Claim \ref{cl:ABC} implies that $V(G)=V(C_5)\cup A_{1,3}\cup A_{3,5}\cup A_{1,4}$. Moreover, $e(A_{1,3},A_{3,5})=e(A_{1,3},A_{1,4})=0$, since $G$ is triangle-free.  

\begin{claim}\label{claim:compbi}
If both $A_{1,4}\neq \emptyset$ and $A_{3,5}\neq \emptyset$, then $G[A_{1,4}\cup A_{3,5}]$ is a complete bipartite graph.  
\end{claim}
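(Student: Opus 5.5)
The claim will follow from Proposition \ref{non-adj} applied to a pair $u\in A_{1,4}$, $w\in A_{3,5}$ with $u\not\sim w$, together with triangle-freeness and the structure $V(G)=V(C_5)\cup A_{1,3}\cup A_{3,5}\cup A_{1,4}$ obtained from Claim \ref{cl:ABC}. Since each $A_{i,j}$ is independent, $G[A_{1,4}\cup A_{3,5}]$ is bipartite with parts $A_{1,4}$ and $A_{3,5}$. It therefore suffices to show that every $u\in A_{1,4}$ is adjacent to every $w\in A_{3,5}$.

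Suppose to the contrary that $u\not\sim w$. By Proposition \ref{non-adj}, $u$ and $w$ have a common neighbor $z$; we locate $z$ in each possible part of $V(G)$ and derive a contradiction.
\begin{itemize}
\item If $z\in V(C_5)$, then $z\in N_{C_5}(u)\cap N_{C_5}(w)=\{v_1,v_4\}\cap\{v_3,v_5\}=\emptyset$, which is impossible.
\item If $z\in A_{1,3}$, then $z\sim u$ contradicts $e(A_{1,3},A_{1,4})=0$, which holds since $u,z$ share the neighbor $v_1$. The same contradiction arises from $z\sim w$ via $e(A_{1,3},A_{3,5})=0$, since $w,z$ share $v_3$.
\item If $z\in A_{1,4}$, then $z\sim u$ contradicts the independence of $A_{1,4}$.
\item If $z\in A_{3,5}$, then $z\sim w$ contradicts the independence of $A_{3,5}$.
\end{itemize}
Since these parts exhaust $V(G)$, $u$ and $w$ have no common neighbor, contradicting Proposition \ref{non-adj}. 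Hence $u\sim w$.

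The only delicate point is that the case analysis uses the full partition from Claim \ref{cl:ABC}, in particular $B=C=A_{2,4}=A_{2,5}=\emptyset$. Without it, $z$ could lie in $B$, in $C$, or in $A_{2,4}$ or $A_{2,5}$; for example, $z\in A_{2,5}$ is adjacent to $v_2,v_5$ and could a priori see both $u$ and $w$. Once Claim \ref{cl:ABC} is available, the argument is immediate. The hypothesis that both sets are nonempty only ensures that the statement is non-vacuous.
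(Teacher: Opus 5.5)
Your proof is correct and follows the paper's argument: the paper compresses your case analysis into the remark that, in $V(G)=V(C_5)\cup A_{1,3}\cup A_{3,5}\cup A_{1,4}$, any common neighbor of $u\in A_{1,4}$ and $w\in A_{3,5}$ would close a triangle, so Proposition~\ref{non-adj} forces $u\sim w$. One small slip in your closing remark: a vertex of $A_{2,5}$ shares $v_5$ with $w$ and so cannot be adjacent to $w$, so it is not a valid example; a vertex of $B_2$ or of $C$ is a genuine example of a potential common neighbor that Claim~\ref{cl:ABC} is needed to rule out.
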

\begin{proof}
 For each $u\in A_{1,4}$ and $v\in A_{3,5}$, if $u$ and $v$ had a common neighbor, then $G$ would contain a triangle. Thus, by Proposition \ref{non-adj}, $u$ must be adjacent to $v$. Therefore $G[A_{1,4}\cup A_{3,5}]$ is a complete bipartite graph.   
\end{proof}
Clearly, if at least one of $A_{1,4}$ and $A_{3,5}$ is an empty set, then the matching number of $G$ is exactly $3$.
By Claim \ref{claim:compbi}, when both $A_{1,4}\neq \emptyset$ and $A_{3,5}\neq \emptyset$, the graph contains a matching of size at least $4$, so this case can occur only when $s>3$. Hence, for $s=3$, at least one of $A_{1,4}$ and $A_{3,5}$ is empty. It follows that $G\in \mathcal{H}(n,3)$.  
When $\nu(G)=s>3$, both $A_{1,4}$ and $A_{3,5}$ are nonempty. By Claim \ref{claim:compbi}, the graph $G$ is a blow-up of $C_5$ (see Figure \ref{fig:structure}). More precisely, 
$$G=C_5(1,|A_{1,3}|+1,1,|A_{3,5}|+1,|A_{1,4}|+1).$$
\begin{figure}[htbp]
    \centering

    \begin{subfigure}{0.45\textwidth}
        \centering
        \begin{tikzpicture}[scale=1.0, every node/.style={circle, draw, inner sep=1pt, font=\small}]
            \node (v1) at (90:1.5) {$v_1$};
            \node (v2) at (18:1.5) {$v_2$};
            \node (v3) at (-54:1.5) {$v_3$};
            \node (v4) at (-126:1.5) {$v_4$};
            \node (v5) at (-198:1.5) {$v_5$};

            \node[ellipse, draw, minimum width=1cm, minimum height=0.6cm] (A13) at (18:3) {$A_{13}$};
            \node[ellipse, draw, minimum width=1cm, minimum height=0.6cm] (A35) at (-135:3) {$A_{35}$};
\node[ellipse, draw, minimum width=1cm, minimum height=0.6cm] (A14) at (-198:3) {$A_{14}$};
            \draw (v1) -- (v2) -- (v3) -- (v4) -- (v5) -- (v1);

            \draw (A13) -- (v1);
            \draw (A13) -- (v3);
            \draw (A35) -- (v3);
            \draw (A35) -- (v5);
\draw (A14) -- (A35);
 \draw (A14) -- (v4);
  \draw (A14) -- (v1);
        \end{tikzpicture}
     
        \label{fig:stru}
    \end{subfigure}
    \hfill

 \caption{The structure of $G$}
    \label{fig:structure}
\end{figure}
Assume that $|A_{1,4}|\leq s-4$. Then $G\backslash v_1$ is bipartite, and $A_{1,4}\cup \{v_3,v_5\}$ is a vertex cover of $G\backslash v_1$. By K\H{o}nig's theorem (Theorem \ref{thm:Konig}), $\nu(G\backslash v_1)\leq s-2$, and thus $\nu(G)\leq s-1$, a contradiction. Hence $|A_{1,4}|\geq s-3$, and similarly $|A_{3,5}|\geq s-3$. The extremal choice of $G$ then gives $|A_{1,4}|=|A_{3,5}|=s-3$. Therefore $G\cong G(n,3,s)$. This completes the proof. 

\subsection{The case for $r>3$}
In this subsection, we prove the case for $r\geq 4$. Let
\begin{equation*}
     f(n,r,s)=\left\{\begin{array}{cl}{(r-1)n-\frac{r}{2}(r-1)-1,}&{\quad\mathrm{if}~s=r-1;}\\{(r-1)n + (s-r)^2 - \frac{1}{2}(r+2)(r-3) - 5,}&{\quad\mathrm{if}~s>r-1.}\\\end{array}\right.
\end{equation*}
A direct calculation gives 
\begin{equation*}
f(n,r,s)-f(n-1,r-1,s-1)=n-1. \tag{4}
\end{equation*}
\begin{lemma}\label{lem:n-1degree}
For $s+1\geq r\geq 4$, let $G$ be an $n$-vertex $K_r$-saturated graph with $\nu(G)=s$ and $e(G)=sat(n,r,s)$. Then $G$ has a vertex of degree $n-1$.
\end{lemma}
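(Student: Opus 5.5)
We argue by contradiction: suppose $\Delta(G)<n-1$. By Theorem \ref{thm:rdegree} we then have $\delta(G)\ge 2(r-2)$, which already gives $e(G)\ge (r-2)n$. Comparing with the upper bound $f(n,r,s)=(r-1)n+O(s^2+r^2)$, this naive count falls short by about $n$ edges, so we need a sharper degree argument. The goal is to show that a $K_r$-saturated graph with no dominating vertex has $e(G)> f(n,r,s)$ whenever $n\ge 63(s-2)^2+r^2$. That contradicts $e(G)=sat(n,r,s)\le f(n,r,s)$, the latter inequality holding by the constructions given at the start of Section \ref{SEC:prelim}.

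\textbf{Step 1: most vertices have degree at least $2(r-1)$.} Let $L$ be the set of vertices of degree less than $2(r-1)$; by the above, each $v\in L$ has degree $2r-4$ or $2r-3$. We would show that $|L|$ is bounded by a quantity like $O(s^2+r^2)$, so that
\[
e(G)\ge \tfrac12\bigl(2(r-1)(n-|L|)+(2r-4)|L|\bigr)=(r-1)n-|L|
\]
already exceeds $f(n,r,s)$ unless $L$ is small. To bound $|L|$, we use the matching number. By the Tutte--Berge formula, $\nu(G)=s$ yields a set $S$ with $|S|\le s$ such that $G-S$ has many odd components, and all but $O(s)$ vertices lie in components of $G-S$ of bounded size (in fact mostly isolated vertices, since a large component would force a large matching).

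\textbf{Step 2: structure via the small separator.} Take a Gallai--Edmonds-type set $S$ of size at most $s$. Then $V\setminus S$ consists mostly of vertices $v$ with $N(v)\subseteq S$, call them \emph{pendant-type}. For two nonadjacent pendant-type vertices $u,v$, Proposition \ref{non-adj} says that $N(u)\cap N(v)\subseteq S$ contains a $K_{r-2}$. Moreover, $N(v)$ must be $K_{r-1}$-free and must satisfy the saturation condition with respect to every non-neighbor. Since $|S|\le s$ is fixed while $n$ is large, pigeonhole over the at most $2^s$ possible neighborhoods yields a class $T$ of more than $(n-O(s))/2^s$ pendant-type vertices sharing the same neighborhood $N\subseteq S$. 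Saturation then forces $G[N]$ to be $K_{r-1}$-free, and every non-edge inside $N$ together with a vertex of $T$ must create a $K_r$.

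\textbf{Step 3: the contradiction.} Since no vertex is dominating, each $x\in S$ misses some vertex $y$, and the common neighbors of $x$ and $y$ must contain a $K_{r-2}$. We aim to show that the pendant-type vertices effectively need degree at least $r-1$ inside a near-complete part of $S$, and that the whole configuration resembles $K_{r-3}\vee(\text{triangle-free part})$. We would then peel off a vertex of $S$ adjacent to all of $T$ and reduce to the case $(n-1,r-1,s-1)$ using (4). In the absence of a dominating vertex, the peeled vertex misses something, which costs extra edges. We would then check that the extra edges exceed the slack in (4), so that $e(G)>f(n,r,s)$. For $r=4$ we would invoke Theorem \ref{thm:4degree} to handle the low-degree situation directly: if $\delta=4$ then $e(G)\ge 4n-15$, which exceeds $3n+O(s^2)$ for large $n$; otherwise $\delta\ge5$ and the bound is immediate.

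The main obstacle is Step 1: showing that few vertices can have degree exactly $2r-4$ or $2r-3$ when there is no dominating vertex. My first attempt would be to use $\nu(G)=s$ to force most vertices to have all their neighbors inside $S$ with $|S|\le s$. Each such vertex then needs, against every non-neighbor, a common $K_{r-2}$ inside $S$. Double counting over pairs of vertices in $S$, using that no vertex of $S$ is universal, should push typical degrees up to $2(r-1)$.
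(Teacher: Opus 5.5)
\textbf{There is a genuine gap: Step 1 aims at a target that would not give a contradiction even if you reached it.} Even with $L=\emptyset$ you would only get $e(G)\ge (r-1)n$. But $f(n,r,s)=(r-1)n+(s-r)^2-\tfrac12(r+2)(r-3)-5$ is larger than $(r-1)n$ as soon as $(s-r)^2>\tfrac12(r+2)(r-3)+5$ (for example $r=4$, $s\ge 7$). So the claim that $(r-1)n-|L|$ exceeds $f(n,r,s)$ when $L$ is small is false in general. What the lemma needs is a surplus over $(r-1)n$ that is \emph{linear in $n$}; that surplus is then absorbed against $(s-r)^2$ using $n\ge 63(s-2)^2+r^2$.

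Steps 2--3 do not supply such a surplus. Bounding $|L|$ is left as a hope. The peeling step cannot work as described: deleting a non-dominating vertex $x$ need not leave a $K_{r-1}$-saturated graph (it need not even be $K_{r-1}$-free), so the induction hypothesis cannot be invoked and (4) gives nothing. This is exactly why the lemma is needed in the first place. Also, for $r=4$, ``$\delta\ge5$ and the bound is immediate'' is wrong: $\delta\ge5$ alone only gives $e(G)\ge 5n/2$.

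\textbf{How the paper gets the linear surplus.} It counts edges around a vertex $v$ of minimum degree $\delta$. By Proposition \ref{non-adj}, every $u\notin N(v)$ has $d_{N(v)}(u)\ge r-2$, hence
\[
e(G)\ \ge\ \sum_{u\notin N(v)}\Bigl(d_{N(v)}(u)+\tfrac12\bigl(\delta-d_{N(v)}(u)\bigr)\Bigr)\ \ge\ \tfrac12(\delta+r-2)(n-\delta).
\]
With Hajnal's bound $\delta\ge 2r-4$ this becomes $\tfrac32(r-2)(n-\delta)$. For $r\ge5$ this beats $f(n,r,s)$ by roughly $\tfrac{r-4}{2}n$ (using also $\delta\le 2(r-1)$). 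For $r=4$ and $\delta=4$ the count gives only $3(n-4)$, which is why Theorem \ref{thm:4degree} is invoked there; for $\delta\in\{5,6\}$ it gives at least $\tfrac72(n-6)$, and $\delta\ge 7$ is trivial.

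\textbf{Your own Step 2 idea would also have worked.} The loss came from counting edges via $\tfrac12\sum_v d(v)$; count them from one side instead. The $n-2s$ vertices missed by a maximum matching form an independent set, and each has degree at least $2r-4$ by Theorem \ref{thm:rdegree}. Hence
\[
e(G)\ge(2r-4)(n-2s)=(r-1)n+(r-3)n-4s(r-2).
\]
Since $r\ge4$, $r\le s+1$ and $n\ge 63(s-2)^2+r^2$, this already exceeds $f(n,r,s)$, with no need for Theorem \ref{thm:4degree}.
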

    \begin{proof}
Suppose to the contrary that $d(v)< n-1$ for all $v\in V(G)$. 
Since the extremal constructions described above have $f(n,r,s)$ edges, we have $e(G)\le f(n,r,s)$.
We first discuss the case when $r=4$. By Theorem \ref{thm:4degree}, if $\delta(G)=4$ then $e(G)>f(n,4,s)$, which is a contradiction. So we may assume $\delta(G)\ge 5$.
Moreover, we have $\delta(G)\le 6$, since if $\delta(G)\ge 7$ then $e(G)> f(n,4,s)$.
Let $v_0$ be a vertex of minimum degree and denote the neighborhood of $v_0$ as $X$ and the other vertices as $Y$.  By Proposition \ref{non-adj}, each vertex in $Y$ has at least $2$ common neighbors with $v_0$. So we have $d_X(v)\ge 2$ for any $v\in Y$. Then
\begin{align*}
e(G)&=e(X,Y)+e(X)+e(Y)\\
    &\geq 2|Y|+\frac{1}{2}(\delta(G)-2)|Y|\\
    &=2(n-\delta(G))+\frac{1}{2}(\delta(G)-2)(n-\delta(G))\\
    &>f(n,4,s),
\end{align*}
where the last inequality follows from $5\leq \delta(G)\leq 6$ and a simple calculation.
Again it is a contradiction.

Next, we discuss the cases when $r\ge 5$. By Theorem \ref{thm:rdegree}, $\delta(G)\ge 2(r-2)$. Since $e(G)\leq f(n,r,s)$, we have $\delta(G)\le 2(r-1)$. Let $v\in V(G)$ be a vertex of minimum degree. Similarly, by Proposition \ref{non-adj},
each vertex in $V(G)\backslash N(v)$ has at least $r-2$  neighbors in $N(v)$. Thus 
\begin{align*}
e(G)&=e(N(v),V(G)\backslash N(v))+e(N(v))+e(V(G)\backslash N(v))\\
    &\geq (r-2)(n-\delta(G))+\frac{1}{2}(\delta(G)-r+2)(n-\delta(G))\\
    &>f(n,r,s),
\end{align*}
which is a contradiction.  
\end{proof}

Now we prove Theorem \ref{thm:rs} by induction on $r$. Recall that we have already proven the case $r=3$. Suppose Theorem \ref{thm:rs} holds for $r=i$, where $i\geq 3$. Let $G$ 
be an $n$-vertex $K_{i+1}$-saturated graph with $\nu(G)=s$ and $sat(n,i+1,s)$ edges. By Lemma \ref{lem:n-1degree}, we can pick a vertex $v$ of degree $n-1$. Consider the graph $G'=G\backslash v$. We have $\nu(G')=s-1$ and $G'$ is $K_i$-saturated. By the induction hypothesis, 
 $$e(G')\ge f(n-1,i,s-1).$$ 
 By (4), we have
 \begin{align*}
 e(G)\ge e(G')+n-1\geq f(n,i+1,s).
\end{align*}
Thus $sat(n,r,s)= f(n,r,s)$. Moreover, since each step of the induction has a vertex adjacent to all the remaining vertices, the extremal structure is obtained by joining $K_{r-3}$ to the corresponding extremal graph in the case $r=3$. Hence $G\cong K_{r-3}\vee K_{2,n-r+1}$ for $s=r-1$, $G\in \mathcal{H}(n,s)$ for $s=r$, and $G\cong G(n,r,s)$ for $s> r$. This completes the proof of Theorem \ref{thm:rs}.  

\section{Concluding Remark}
In this paper, we focus on the saturation problems for graphs with fixed matching number. We determine the saturation number $sat(n,r,s)$ for $s\leq 2+\sqrt{\frac{n-r^2}{63}}$. In fact, the coefficient $\sqrt{1/63}$ can be improved by a more careful calculation. A natural question is whether $sat(n,r,s)$ still equals $e(G(n,r,s))$ when $s=\Omega(\sqrt{n})$. The answer is negative. For example, when $k\geq 100$ and $n=4k+1$, the graph $C_5(1,k,k,k,k)$ is a $K_3$-saturated graph with matching number $2k$, but it has fewer edges than $G(n,3,2k)$. Thus we propose the following problem.

\begin{problem}
 Determine $sat(n,r,s)$ for $s=\Omega(\sqrt{n})$.   
\end{problem}

{\noindent \bf Acknowledgments}. 
The authors are grateful to Jialin He, Xizhi Liu and Jun Gao for their helpful discussions.

\bibliographystyle{abbrv}
\bibliography{Saturation}

\end{document}